\newcommand{\R}{\mathbb{R}}
\newcommand{\lm}{\lambda}
\newcommand{\s}{\sigma}
\newcommand{\vf}{\varphi}
\newcommand{\Tr}{\mathrm{Tr}}
\theoremstyle{plain}
\newtheorem{theorem}{Theorem}[section]
\newtheorem{lemma}[theorem]{Lemma}
\newtheorem{proposition}[theorem]{Proposition}
\newtheorem{corollary}[theorem]{Corollary}
\theoremstyle{definition}
\newtheorem{definition}[theorem]{Definition}
\theoremstyle{remark}
\numberwithin{equation}{section}
\begin{document}

\title{A Stochastic Gradient Descent Theorem and the Back-Propagation Algorithm}

\author{Hao Wu}

\thanks{}

\address{Department of Mathematics, The George Washington University, Phillips Hall , Room 739, 801 22nd Street, N.W., Washington DC 20052, USA. Telephone: 1-202-994-0653, Fax: 1-202-994-6760}

\email{haowu@gwu.edu}

\subjclass[2010]{Primary 41A60}

\keywords{Stochastic gradient descent, back-propagation algorithm} 

\begin{abstract}
We establish a convergence theorem for a certain type of stochastic gradient descent, which leads to a convergent variant of the back-propagation algorithm.
\end{abstract}

\maketitle

\section{A Convergence Theorem for Stochastic Gradient Descent}\label{sec-sgd}

\subsection{Statement of the theorem}

We present elements of $R^l$ by column vectors and denote by $\|\mathbf{z}\|=\sqrt{z^T z}$ the standard norm on $\R^l$. For any $0<r\leq \infty$ and $\mathbf{z}\in \R^l$, we denote by $B^l_r(\mathbf{z})$ the open ball of radius $r$ centered on $\mathbf{z}$ in $\R^l$. In particular, $B^l_\infty(\mathbf{z})=\R^l$.

\begin{theorem}\label{thm-sgd}
Fix a $0 <\rho\leq \infty$. Assume that:
\begin{enumerate}
	\item $\mu$ is a probability measure on $B^m_\rho(\mathbf{0})$.
	\item\label{assumption-f} $f:\R^n \times B^m_\rho(\mathbf{0}) \rightarrow \R$ is a function satisfying:
	\begin{enumerate}
		\item For any $\mathbf{x} \in \R^n$, the function $f(\mathbf{x},\ast)$ is $\mu$-measurable on $B^m_\rho(\mathbf{0})$.
		\item For any $\mathbf{y} \in B^m_\rho(\mathbf{0})$, the function $f(\ast,\mathbf{y})$ has continuous firs-order partial derivatives over $\R^n$.
		\item\label{assumption-F-bounded} For any $r>0$,
		\begin{itemize}
		  \item the function $f(\mathbf{x},\mathbf{y})$ is bounded on $B^n_r(\mathbf{0}) \times B^m_\rho(\mathbf{0})$,
			\item $\|\nabla_\mathbf{x} f(\mathbf{x},\mathbf{y})\|$ is bounded on $B^n_r(\mathbf{0}) \times B^m_\rho(\mathbf{0})$, where $\nabla_\mathbf{x} f(\mathbf{x},\mathbf{y})$ is the gradient vector of $f(\mathbf{x},\mathbf{y})$ with respect to $\mathbf{x} \in \R^n$,
			\item $\|\nabla_\mathbf{x} f(\mathbf{x},\mathbf{y})\|$ is Lipschitz with respect to $\mathbf{x}$ on $B^n_r(\mathbf{0}) \times B^m_\rho(\mathbf{0})$. That is, there is an $M>0$ such that $\|\nabla_\mathbf{x} f(\mathbf{x},\mathbf{y})-\nabla_\mathbf{x} f(\mathbf{x}',\mathbf{y})\|\leq M\|\mathbf{x}-\mathbf{x}'\|$ for any $\mathbf{x},\mathbf{x}'\in B^n_r(\mathbf{0})$ and $\mathbf{y}\in B^m_\rho(\mathbf{0})$.
		\end{itemize}
		\item\label{assumption-gradient} There is an $R_0>0$ such that $\mathbf{x}^T \nabla_\mathbf{x} f(\mathbf{x},\mathbf{y}) \geq 0$ for all $\mathbf{x} \geq R_0$ and $\mathbf{y} \in B^m_\rho(\mathbf{0})$.
	\end{enumerate}
	\item $\{a_k\}_{k=0}^\infty$ is a sequence of positive numbers satisfying $\sum_{k=0}^\infty a_k =\infty$ and $\sum_{k=0}^\infty a_k^2 <\infty$. Define $A = \sup\{a_k~|~k\geq 0\}$.
	\item 
	\begin{itemize}
		\item $\mathbf{x}_0$ is any fixed element of $\R^n$, 
		\item $R_1 = \max\{\sqrt{\|\mathbf{x}_0\|^2 + \sum_{k=0}^\infty a_k^2},\sqrt{R_0^2 +2AR_0 +\sum_{k=0}^\infty a_k^2}\}$,
		\item $\Phi =\sup \{\|\nabla_\mathbf{x} f(\mathbf{x},\mathbf{y})\|~|~ (\mathbf{x},\mathbf{y}) \in B^n_{R_1}(\mathbf{0}) \times B^m_\rho(\mathbf{0})\}$.
	\end{itemize}
	\item $\{\mathbf{y}_k\}_{k=0}^\infty$ is a sequence of independent random variables with identical probability distribution taking values in $B^m_\rho(\mathbf{0})$, where the probability distribution of each $\mathbf{y}_k$ is $\mu$.
	\item For any $\mathbf{x} \in \R^n$, $F(\mathbf{x}) = \int_{B^m_\rho(\mathbf{0})} f(\mathbf{x},\ast) d\mu$.
\end{enumerate}
Fix a positive number $\vf$ satisfying $\vf\geq \Phi$. Define a sequence $\{\mathbf{x}_k\}_{k=0}^\infty$ of random variables by 
\begin{equation}\label{eq-def-x-k}
\mathbf{x}_{k+1} = \mathbf{x}_k - \frac{a_k}{\vf} \nabla_\mathbf{x} f(\mathbf{x}_k,\mathbf{y}_k) \text{ for } k\geq 0.
\end{equation}
Then 
\begin{itemize}
	\item $\{F(\mathbf{x}_k)\}$ converges almost surely to a finite number,
	\item $\{\nabla F(\mathbf{x}_k)\}$ converges almost surely to $\mathbf{0}$,
	\item any limit point of $\{\mathbf{x}_k\}$ is almost surely a stationary point of the function $F$.
\end{itemize}
\end{theorem}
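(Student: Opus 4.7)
The strategy is to combine a deterministic boundedness argument for $\{\mathbf{x}_k\}$ with a classical Robbins--Siegmund almost supermartingale argument. First I would show by induction on $k$ that $\|\mathbf{x}_k\|\leq R_1$, so that assumption \ref{assumption-F-bounded} gives $\|\nabla_{\mathbf{x}}f(\mathbf{x}_k,\mathbf{y}_k)\|\leq\Phi\leq\vf$ and hence $\|\mathbf{x}_{k+1}-\mathbf{x}_k\|\leq a_k$. Squaring (\ref{eq-def-x-k}) gives
\[
\|\mathbf{x}_{k+1}\|^2=\|\mathbf{x}_k\|^2-\frac{2a_k}{\vf}\mathbf{x}_k^T\nabla_{\mathbf{x}}f(\mathbf{x}_k,\mathbf{y}_k)+\frac{a_k^2}{\vf^2}\|\nabla_{\mathbf{x}}f(\mathbf{x}_k,\mathbf{y}_k)\|^2,
\]
so by assumption \ref{assumption-gradient}, whenever $\|\mathbf{x}_k\|\geq R_0$ the middle term is nonpositive and $\|\mathbf{x}_{k+1}\|^2\leq\|\mathbf{x}_k\|^2+a_k^2$, while whenever $\|\mathbf{x}_k\|<R_0$ one has $\|\mathbf{x}_{k+1}\|\leq R_0+a_k\leq R_0+A$. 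Splitting the trajectory at the last index $\tau\leq k$ with $\|\mathbf{x}_\tau\|<R_0$ (if any) and accumulating the remaining $a_j^2$'s yields $\|\mathbf{x}_{k+1}\|^2\leq R_1^2$ exactly because of how $R_1$ is defined.

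Using this boundedness together with the bounded-gradient hypothesis, a standard dominated-convergence argument gives $\nabla F(\mathbf{x})=\int\nabla_{\mathbf{x}}f(\mathbf{x},\ast)\,d\mu$ with $\nabla F$ Lipschitz of constant $M$ on $B^n_{R_1}(\mathbf{0})$. The descent lemma for $F$ then yields
\[
F(\mathbf{x}_{k+1})\leq F(\mathbf{x}_k)+\nabla F(\mathbf{x}_k)^T(\mathbf{x}_{k+1}-\mathbf{x}_k)+\tfrac{M}{2}\|\mathbf{x}_{k+1}-\mathbf{x}_k\|^2,
\]
and, setting $\mathcal{F}_k=\sigma(\mathbf{y}_0,\ldots,\mathbf{y}_{k-1})$, the independence of $\mathbf{y}_k$ from $\mathcal{F}_k$ gives $E[\nabla_{\mathbf{x}}f(\mathbf{x}_k,\mathbf{y}_k)\mid\mathcal{F}_k]=\nabla F(\mathbf{x}_k)$; conditioning produces
\[
E[F(\mathbf{x}_{k+1})\mid\mathcal{F}_k]\leq F(\mathbf{x}_k)-\frac{a_k}{\vf}\|\nabla F(\mathbf{x}_k)\|^2+\frac{M\Phi^2}{2\vf^2}a_k^2.
\]
Because $F$ is bounded on $B^n_{R_1}(\mathbf{0})$ and $\sum a_k^2<\infty$, the Robbins--Siegmund almost supermartingale theorem yields, in one go, that $\{F(\mathbf{x}_k)\}$ converges almost surely to a finite limit and that $\sum_k a_k\|\nabla F(\mathbf{x}_k)\|^2<\infty$ almost surely.

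Since $\sum a_k=\infty$, this summability immediately gives $\liminf_k\|\nabla F(\mathbf{x}_k)\|=0$ almost surely; the main obstacle is upgrading this to a true limit, i.e.\ ruling out oscillation. Here I would use that $\|\mathbf{x}_{k+1}-\mathbf{x}_k\|\leq a_k\to 0$, so by Lipschitzness $\|\nabla F(\mathbf{x}_{k+1})-\nabla F(\mathbf{x}_k)\|\leq Ma_k\to 0$, combined with a standard blocking argument: if on some sample path $\limsup_k\|\nabla F(\mathbf{x}_k)\|\geq 2\delta>0$, extract indices $k_n<\ell_n$ with $\|\nabla F(\mathbf{x}_{k_n})\|\geq 2\delta$, $\|\nabla F(\mathbf{x}_{\ell_n})\|\leq\delta$, and $\|\nabla F(\mathbf{x}_j)\|\geq\delta$ for $k_n\leq j<\ell_n$. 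The summability of $a_j\|\nabla F(\mathbf{x}_j)\|^2$ forces $\sum_{j=k_n}^{\ell_n-1}a_j\to 0$, so $\|\mathbf{x}_{\ell_n}-\mathbf{x}_{k_n}\|\to 0$, contradicting the Lipschitz bound $\|\nabla F(\mathbf{x}_{\ell_n})-\nabla F(\mathbf{x}_{k_n})\|\geq\delta$. Thus $\nabla F(\mathbf{x}_k)\to\mathbf{0}$ almost surely, and if $\mathbf{x}_{k_j}\to\mathbf{x}^*$ along any subsequence on this event, continuity of $\nabla F$ gives $\nabla F(\mathbf{x}^*)=\mathbf{0}$, so $\mathbf{x}^*$ is a stationary point of $F$.
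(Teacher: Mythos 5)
Your proposal is correct, and its skeleton (boundedness of $\{\mathbf{x}_k\}$, Lipschitz descent lemma for $F$, summability of $a_k\|\nabla F(\mathbf{x}_k)\|^2$, then $\liminf=0$ upgraded to $\lim=0$ by a blocking argument) matches the paper's. The genuine differences are in how the probabilistic core and the oscillation step are executed. Where you invoke the Robbins--Siegmund almost-supermartingale theorem once, on $E[F(\mathbf{x}_{k+1})\mid\mathcal{F}_k]\leq F(\mathbf{x}_k)-\frac{a_k}{\vf}\|\nabla F(\mathbf{x}_k)\|^2+O(a_k^2)$, to obtain simultaneously the a.s.\ convergence of $F(\mathbf{x}_k)$ and the a.s.\ finiteness of $\sum_k a_k\|\nabla F(\mathbf{x}_k)\|^2$, the paper works from scratch: it telescopes the expected descent inequality to get $\sum_k a_k E(\|\nabla F(\mathbf{x}_k)\|^2)<\infty$ (Lemma \ref{lemma-gradient-square-sum-converge}), builds an explicit $L^2$-bounded martingale $z_k=\sum_{j\le k}a_j\nabla F(\mathbf{x}_j)^T(\nabla_{\mathbf{x}}f(\mathbf{x}_j,\mathbf{y}_j)-\nabla F(\mathbf{x}_j))$ to control the noise, and then exhibits a bounded decreasing auxiliary sequence $v_k$ to get convergence of $F(\mathbf{x}_k)$ (Lemma \ref{lemma-gradient-mixed-square-sum-converge}); your route is shorter but leans on a stronger off-the-shelf theorem. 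In the final step your upcrossing argument is also organized more economically: you get $\sum_{j=k_n}^{\ell_n-1}a_j\to 0$ directly from the a.s.\ summability of $a_j\|\nabla F(\mathbf{x}_j)\|^2$ (using $\|\nabla F(\mathbf{x}_j)\|\geq\delta$ on the block) and derive the contradiction from the Lipschitz bound alone, whereas the paper derives a lower bound $\sum_{k=p_i}^{q_i-1}a_k\geq\frac{s}{4M}$ from Lipschitzness and contradicts it using the descent inequality together with the convergence of $F(\mathbf{x}_k)$ and of the martingale term; your version does not need the convergence of $F(\mathbf{x}_k)$ at all for this step. Your last-exit-time treatment of the boundedness induction is equivalent to the paper's invariant $\|\mathbf{x}_k\|^2+\sum_{j\geq k}a_j^2\leq R_1^2$. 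Two small points worth making explicit if you write this up: the bound $\|\nabla_{\mathbf{x}}f(\mathbf{x}_k,\mathbf{y}_k)\|\leq\Phi$ used inside the induction is legitimate only via the induction hypothesis $\|\mathbf{x}_k\|\leq R_1$ (continuity extends the sup over the open ball to its closure), and Robbins--Siegmund requires nonnegativity, so you should shift $F$ by the bound $B$ on $B^n_{R_1}(\mathbf{0})$ before applying it, as you implicitly indicate.
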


Before moving onto the proof of Theorem \ref{thm-sgd}, it is worth pointing out that the most restrictive assumption about the function $f$ in this theorem is Assumption (\ref{assumption-gradient}). All the other assumptions about $f$ are fairly non-restrictive. Assumption (\ref{assumption-gradient}) was essentially used by Bottou in \cite[Section 5]{Bottou}. However, to deduce the convergence of the stochastic gradient descent, he imposed additional restrictive assumptions in \cite[Subsection 5.1]{Bottou}. In the definition of the sequence $\{\mathbf{x}_k\}$ above, we divide the learning rate $a_k$ by the (potentially large) constant $\vf$, which eliminates the need for Bottou's additional assumptions at the cost of having smaller steps in the gradient descent. So, comparing to Bottou's results, Theorem \ref{thm-sgd} applies to a more general class of functions but leads to a potentially slower convergence.

\subsection{Proof of the theorem} The key step in the proof Theorem \ref{thm-sgd} is to prove that $\{\mathbf{x}_k\}$ is bounded, which turns out to be fairly simple. The rest of the proof is somewhat more technical, but mainly consists of arguments similar to those already appeared in literature on stochastic approximation such as \cite{Bertsekas-Tsitsiklis-gradient,Blum-miltidimensional-stochastic-approximation,Robbins-Monro}.

\begin{lemma}\label{lemma-x-bounded}
$\|\mathbf{x}_k\| < R_1$ for all $k\geq 0$.
\end{lemma}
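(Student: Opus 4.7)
The natural strategy is induction on $k$. The base case is immediate: since every $a_k>0$, we have $\sum_{k=0}^\infty a_k^2>0$, so $\|\mathbf{x}_0\|^2<\|\mathbf{x}_0\|^2+\sum_{k=0}^\infty a_k^2\leq R_1^2$. For the inductive step, assume $\|\mathbf{x}_j\|<R_1$ for all $j\leq k$. Squaring the defining recurrence \eqref{eq-def-x-k} gives
\[
\|\mathbf{x}_{k+1}\|^2 = \|\mathbf{x}_k\|^2 - \frac{2a_k}{\vf}\,\mathbf{x}_k^T\nabla_\mathbf{x} f(\mathbf{x}_k,\mathbf{y}_k) + \frac{a_k^2}{\vf^2}\|\nabla_\mathbf{x} f(\mathbf{x}_k,\mathbf{y}_k)\|^2.
\]
By the inductive hypothesis, $(\mathbf{x}_j,\mathbf{y}_j)\in B^n_{R_1}(\mathbf{0})\times B^m_\rho(\mathbf{0})$ for every $j\leq k$, so the definition of $\Phi$ and the choice $\vf\geq\Phi$ force $\|\nabla_\mathbf{x} f(\mathbf{x}_j,\mathbf{y}_j)\|/\vf\leq 1$ at every earlier step.

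Now I would split into two regimes according to the sign assumption (\ref{assumption-gradient}). If $\|\mathbf{x}_k\|\geq R_0$, the cross term is non-negative, so the identity above plus the gradient bound yields the clean recursion $\|\mathbf{x}_{k+1}\|^2\leq\|\mathbf{x}_k\|^2+a_k^2$. If instead $\|\mathbf{x}_k\|<R_0$, the triangle inequality together with the same gradient bound gives $\|\mathbf{x}_{k+1}\|\leq R_0+a_k\leq R_0+A$, hence $\|\mathbf{x}_{k+1}\|^2\leq R_0^2+2R_0 a_k + a_k^2$.

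To turn these step estimates into the global bound, I would let $j$ be the largest index in $\{0,1,\dots,k\}$ with $\|\mathbf{x}_j\|<R_0$, if such a $j$ exists. In that case the second regime estimate at step $j$, followed by telescoping the first regime estimate from $j+1$ through $k$, gives
\[
\|\mathbf{x}_{k+1}\|^2 \leq R_0^2 + 2R_0 A + \sum_{i=j}^{k} a_i^2 < R_0^2 + 2R_0 A + \sum_{i=0}^\infty a_i^2 \leq R_1^2,
\]
where the strict inequality uses that every $a_i$ is positive. If no such $j$ exists, then $\|\mathbf{x}_i\|\geq R_0$ for all $i\leq k$, and telescoping from $i=0$ produces $\|\mathbf{x}_{k+1}\|^2\leq \|\mathbf{x}_0\|^2+\sum_{i=0}^k a_i^2<\|\mathbf{x}_0\|^2+\sum_{i=0}^\infty a_i^2\leq R_1^2$. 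Either way, $\|\mathbf{x}_{k+1}\|<R_1$, closing the induction.

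The only real obstacle is making sure the telescoping yields a \emph{strict} bound matching the definition of $R_1$, which is why $R_1$ has been inflated by the tail $\sum a_k^2$ beyond the two natural bounds $\|\mathbf{x}_0\|$ and $R_0+A$; the positivity of the sequence $\{a_k\}$ is what converts the $\leq$ into $<$ at the last step. Everything else is bookkeeping around the two regimes determined by Assumption (\ref{assumption-gradient}).
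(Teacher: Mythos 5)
Your proof is correct and follows essentially the same route as the paper: the same two-case split at $\|\mathbf{x}_k\|=R_0$, the same use of $\vf\geq\Phi$ to control the step size, and the same role for the two expressions in the definition of $R_1$. The only difference is bookkeeping: the paper runs a one-step induction on the strengthened invariant $\|\mathbf{x}_k\|^2+\sum_{j\geq k}a_j^2\leq R_1^2$, whereas you telescope from the last index at which the iterate lay inside $B^n_{R_0}(\mathbf{0})$; both devices absorb the accumulated $\sum a_i^2$ into $R_1$ in the same way.
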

\begin{proof}
We claim that $\|\mathbf{x}_k\|^2 + \sum_{j=k}^\infty a_j^2 \leq R_1^2$, which implies the lemma. We prove this claim by an induction on $k$. The claim is true for $k=0$ by the definition of $R_1$. Assume that the claim is true for $k$. Consider two cases:
\begin{itemize}
	\item[Case 1:] $\|\mathbf{x}_k\| \leq R_0$. In this case, we have that
	\begin{eqnarray*}
	&& \|\mathbf{x}_{k+1}\|^2 = \|\mathbf{x}_k - \frac{a_k}{\vf} \nabla_\mathbf{x} f(\mathbf{x}_k,\mathbf{y}_k)\|^2 = \|\mathbf{x}_k\|^2 -2 \frac{a_k}{\vf} \mathbf{x}_k^T \nabla_\mathbf{x} f(\mathbf{x}_k,\mathbf{y}_k) + \frac{a_k^2}{\vf^2}\|\nabla_\mathbf{x} f(\mathbf{x}_k,\mathbf{y}_k)\|^2 \\
	& \leq & \|\mathbf{x}_k\|^2 +2 a_k \|\mathbf{x}_k\| \frac{\|\nabla_\mathbf{x} f(\mathbf{x}_k,\mathbf{y}_k)\|}{\vf} + a_k^2\frac{\|\nabla_\mathbf{x} f(\mathbf{x}_k,\mathbf{y}_k)\|^2}{\vf^2} \leq R_0^2 + 2A R_0 + a_k^2.
	\end{eqnarray*}
	So $\|\mathbf{x}_{k+1}\|^2 + \sum_{j=k+1}^\infty a_j^2 \leq R_0^2 + 2A R_0 + \sum_{j=k}^\infty a_j^2 \leq R_1^2$.
	\item[Case 2:] $R_0< \|\mathbf{x}_k\| < R_1$. In this case, using Assumption (\ref{assumption-gradient}), we have that
	\[ 
	\|\mathbf{x}_{k+1}\|^2 =  \|\mathbf{x}_k\|^2 -2 \frac{a_k}{\vf} \mathbf{x}_k^T \nabla_\mathbf{x} f(\mathbf{x}_k,\mathbf{y}_k) + \frac{a_k^2}{\vf^2}\|\nabla_\mathbf{x} f(\mathbf{x}_k,\mathbf{y}_k)\|^2 \leq \|\mathbf{x}_k\|^2 + a_k^2.
	\]
	So $\|\mathbf{x}_{k+1}\|^2 + \sum_{j=k+1}^\infty a_j^2 \leq \|\mathbf{x}_k\|^2 + \sum_{j=k}^\infty a_j^2 \leq R_1^2$.
\end{itemize}
Combining these two cases, one can see that the claim is true for $k+1$. This completes the induction and proves the claim. And the lemma follows.
\end{proof}

\begin{lemma}\label{lemma-lipschitz}
$\nabla F$ is Lipschitz on $B^n_{R_1}(\mathbf{0})$. For any $\mathbf{x},\mathbf{x}' \in B^n_{R_1}(\mathbf{0})$, we have $\|\nabla F(\mathbf{x}') -\nabla F(\mathbf{x})\| \leq M \|\mathbf{x}' -\mathbf{x}\|$, where $M$ comes from Condition (\ref{assumption-F-bounded}) in Theorem \ref{thm-sgd}. Moreover, for such $\mathbf{x}$ and $\mathbf{x}'$, we have $F(\mathbf{x}') \leq F(\mathbf{x}) + (\mathbf{x}'-\mathbf{x})^T\nabla F(\mathbf{x}) + \frac{M}{2}\|\mathbf{x}'-\mathbf{x}\|^2$.
\end{lemma}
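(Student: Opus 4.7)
The proof proceeds in three stages. First, I will justify differentiating under the integral to obtain $\nabla F(\mathbf{x}) = \int_{B^m_\rho(\mathbf{0})} \nabla_\mathbf{x} f(\mathbf{x},\mathbf{y})\, d\mu(\mathbf{y})$ for $\mathbf{x} \in B^n_{R_1}(\mathbf{0})$. Here I apply the standard dominated convergence justification: for any $\mathbf{x} \in B^n_{R_1}(\mathbf{0})$ and any sequence $\mathbf{x}_j \to \mathbf{x}$ in that ball, the difference quotients $(f(\mathbf{x}_j,\mathbf{y}) - f(\mathbf{x},\mathbf{y}))/\|\mathbf{x}_j - \mathbf{x}\|$ are bounded uniformly in $j$ and $\mathbf{y}$ by the supremum $\Phi$ of $\|\nabla_\mathbf{x} f\|$ on $B^n_{R_1}(\mathbf{0}) \times B^m_\rho(\mathbf{0})$ (finite by Assumption (\ref{assumption-F-bounded}), using the mean value theorem on the segment joining $\mathbf{x}$ to $\mathbf{x}_j$, which lies in the convex ball $B^n_{R_1}(\mathbf{0})$ for $j$ large). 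Since $\mu$ is a probability measure, the constant $\Phi$ is $\mu$-integrable, and $f(\mathbf{x},\cdot)$ itself is $\mu$-integrable because it is bounded on $B^n_{R_1}(\mathbf{0})\times B^m_\rho(\mathbf{0})$. This lets me pass the partial derivative through the integral component-by-component.

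Second, the Lipschitz bound on $\nabla F$ follows directly from the Lipschitz hypothesis on $\nabla_\mathbf{x} f$. For $\mathbf{x},\mathbf{x}' \in B^n_{R_1}(\mathbf{0})$,
\begin{equation*}
\|\nabla F(\mathbf{x}') - \nabla F(\mathbf{x})\| \leq \int_{B^m_\rho(\mathbf{0})} \|\nabla_\mathbf{x} f(\mathbf{x}',\mathbf{y}) - \nabla_\mathbf{x} f(\mathbf{x},\mathbf{y})\|\, d\mu(\mathbf{y}) \leq M\|\mathbf{x}'-\mathbf{x}\|,
\end{equation*}
using Assumption (\ref{assumption-F-bounded}) with $r=R_1$ and the fact that $\mu$ is a probability measure.

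Third, for the descent inequality, I use the convexity of $B^n_{R_1}(\mathbf{0})$ so the segment $\mathbf{x}+t(\mathbf{x}'-\mathbf{x})$, $t\in[0,1]$, stays in the ball. Setting $g(t) = F(\mathbf{x}+t(\mathbf{x}'-\mathbf{x}))$, which is differentiable with $g'(t) = (\mathbf{x}'-\mathbf{x})^T \nabla F(\mathbf{x}+t(\mathbf{x}'-\mathbf{x}))$, the fundamental theorem of calculus gives
\begin{equation*}
F(\mathbf{x}') - F(\mathbf{x}) - (\mathbf{x}'-\mathbf{x})^T \nabla F(\mathbf{x}) = \int_0^1 (\mathbf{x}'-\mathbf{x})^T \bigl(\nabla F(\mathbf{x}+t(\mathbf{x}'-\mathbf{x})) - \nabla F(\mathbf{x})\bigr)\, dt.
\end{equation*}
Applying Cauchy--Schwarz and the Lipschitz bound just established yields an integrand bounded by $Mt\|\mathbf{x}'-\mathbf{x}\|^2$, which integrates to $\frac{M}{2}\|\mathbf{x}'-\mathbf{x}\|^2$.

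The only mildly delicate point is the first step, namely verifying the hypotheses to differentiate under the integral; once that is done everything else is a routine manipulation. Technically, one should also confirm that $\nabla F$ is well-defined and continuous on a neighborhood large enough to contain all points of interest, but this is immediate from the same uniform bound on $\|\nabla_\mathbf{x} f\|$ together with dominated convergence.
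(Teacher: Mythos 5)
Your proposal is correct and follows essentially the same route as the paper: differentiate under the integral via dominated convergence, deduce the Lipschitz bound on $\nabla F$ by integrating the pointwise Lipschitz estimate against the probability measure $\mu$, and obtain the descent inequality from the fundamental theorem of calculus along the segment together with Cauchy--Schwarz. The only difference is that you spell out the dominated-convergence justification in more detail than the paper does, which is a welcome addition rather than a deviation.
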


\begin{proof}
By the Dominated Convergence Theorem, we have that the gradient of $F$ is $\nabla F(\mathbf{x}) = \int_{B^m_\rho(\mathbf{0})} \nabla_\mathbf{x} f(\mathbf{x},\ast) d\mu$. So, for any $\mathbf{x},\mathbf{x}' \in B^n_{R_1}(\mathbf{0})$, we have 
\begin{eqnarray*}
 \|\nabla F(\mathbf{x}') -\nabla F(\mathbf{x})\| & = & \|\int_{B^m_\rho(\mathbf{0})} (\nabla_\mathbf{x} f(\mathbf{x},\ast)-\nabla_\mathbf{x} f(\mathbf{x}',\ast)) d\mu\| \leq  \int_{B^m_\rho(\mathbf{0})} \|\nabla_\mathbf{x} f(\mathbf{x},\ast)-\nabla_\mathbf{x} f(\mathbf{x}',\ast)\| d\mu \\
& \leq & \int_{B^m_\rho(\mathbf{0})} M\|\mathbf{x} -\mathbf{x}'\| d\mu = M\|\mathbf{x} -\mathbf{x}'\|.
\end{eqnarray*}
Next, for any $\mathbf{x},\mathbf{x}' \in B^n_{R_1}(\mathbf{0})$, define $g(t) := F((1-t) \mathbf{x} +t\mathbf{x}')$ for $0\leq t \leq 1$. Then $g'(t) = (\mathbf{x}'-\mathbf{x})^T \nabla F((1-t) \mathbf{x} +t\mathbf{x}')$. And 
\begin{eqnarray*}
F(\mathbf{x}') - F(\mathbf{x}) & = & g(1) -g(0) = \int_0^1 g'(t)dt = \int_0^1 (\mathbf{x}'-\mathbf{x})^T \nabla F((1-t) \mathbf{x} +t\mathbf{x}')dt \\
& = & \int_0^1 (\mathbf{x}'-\mathbf{x})^T (\nabla F(\mathbf{x}) + \nabla F((1-t) \mathbf{x} +t\mathbf{x}')-\nabla F(\mathbf{x})) dt \\
& = & \int_0^1 (\mathbf{x}'-\mathbf{x})^T \nabla F(\mathbf{x}) dt + \int_0^1 (\mathbf{x}'-\mathbf{x})^T (\nabla F((1-t) \mathbf{x} +t\mathbf{x}')-\nabla F(\mathbf{x})) dt \\
& \leq  & (\mathbf{x}'-\mathbf{x})^T \nabla F(\mathbf{x}) + \int_0^1 \|\mathbf{x}'-\mathbf{x}\| \cdot \|\nabla F((1-t) \mathbf{x} +t\mathbf{x}')-\nabla F(\mathbf{x})\| dt \\
& \leq  & (\mathbf{x}'-\mathbf{x})^T \nabla F(\mathbf{x}) + \int_0^1 M\|\mathbf{x}'-\mathbf{x}\| \cdot \|(1-t) \mathbf{x} +t\mathbf{x}'-\mathbf{x}\| dt \\
& = & (\mathbf{x}'-\mathbf{x})^T \nabla F(\mathbf{x}) + M\|\mathbf{x}'-\mathbf{x}\|^2\int_0^1 t dt = (\mathbf{x}'-\mathbf{x})^T \nabla F(\mathbf{x}) + \frac{M}{2}\|\mathbf{x}'-\mathbf{x}\|^2
\end{eqnarray*}
This proves the lemma.
\end{proof}

\begin{lemma}\label{lemma-gradient-square-sum-converge}
$\sum_{k=0}^\infty a_k E(\|\nabla F(\mathbf{x}_k)\|^2)$ converges and, consequently, $\sum_{k=0}^\infty a_k \|\nabla F(\mathbf{x}_k)\|^2$ converges almost surely.
\end{lemma}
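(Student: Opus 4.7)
The plan is to apply the descent inequality from Lemma \ref{lemma-lipschitz} along the trajectory $\mathbf{x}_k \to \mathbf{x}_{k+1}$, take conditional expectations, and telescope. Since Lemma \ref{lemma-x-bounded} guarantees $\mathbf{x}_k, \mathbf{x}_{k+1} \in B^n_{R_1}(\mathbf{0})$, Lemma \ref{lemma-lipschitz} applies and yields
\[
F(\mathbf{x}_{k+1}) \leq F(\mathbf{x}_k) + (\mathbf{x}_{k+1}-\mathbf{x}_k)^T\nabla F(\mathbf{x}_k) + \frac{M}{2}\|\mathbf{x}_{k+1}-\mathbf{x}_k\|^2.
\]
Substituting the update rule \eqref{eq-def-x-k} rewrites this as
\[
F(\mathbf{x}_{k+1}) \leq F(\mathbf{x}_k) - \frac{a_k}{\vf}\nabla_\mathbf{x} f(\mathbf{x}_k,\mathbf{y}_k)^T \nabla F(\mathbf{x}_k) + \frac{Ma_k^2}{2\vf^2}\|\nabla_\mathbf{x} f(\mathbf{x}_k,\mathbf{y}_k)\|^2.
\]

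Next I would take expectations. Since $\mathbf{y}_k$ is independent of $\mathbf{x}_k$ (which depends only on $\mathbf{x}_0, \mathbf{y}_0,\ldots,\mathbf{y}_{k-1}$) and has distribution $\mu$, conditioning on $\mathbf{x}_k$ gives $E[\nabla_\mathbf{x} f(\mathbf{x}_k,\mathbf{y}_k)\mid \mathbf{x}_k] = \nabla F(\mathbf{x}_k)$, so the cross term contributes $-\frac{a_k}{\vf}E[\|\nabla F(\mathbf{x}_k)\|^2]$. For the quadratic term, Lemma \ref{lemma-x-bounded} together with the definition of $\Phi$ and the choice $\vf \geq \Phi$ give $\|\nabla_\mathbf{x} f(\mathbf{x}_k,\mathbf{y}_k)\| \leq \Phi \leq \vf$. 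Thus
\[
\frac{a_k}{\vf}E[\|\nabla F(\mathbf{x}_k)\|^2] \leq E[F(\mathbf{x}_k)] - E[F(\mathbf{x}_{k+1})] + \frac{M\Phi^2}{2\vf^2}a_k^2.
\]

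Summing from $k=0$ to $N$ telescopes the $F$-terms, leaving
\[
\sum_{k=0}^{N} \frac{a_k}{\vf}E[\|\nabla F(\mathbf{x}_k)\|^2] \leq E[F(\mathbf{x}_0)] - E[F(\mathbf{x}_{N+1})] + \frac{M\Phi^2}{2\vf^2}\sum_{k=0}^{N}a_k^2.
\]
By Condition (\ref{assumption-F-bounded}) of Theorem \ref{thm-sgd}, $F$ is bounded on $B^n_{R_1}(\mathbf{0})$, so $E[F(\mathbf{x}_0)] - E[F(\mathbf{x}_{N+1})]$ is uniformly bounded in $N$; the second sum is finite by the assumption $\sum a_k^2 < \infty$. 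Letting $N\to\infty$ and multiplying by $\vf$ yields convergence of $\sum_{k=0}^\infty a_k E(\|\nabla F(\mathbf{x}_k)\|^2)$.

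Finally, since the random variables $a_k \|\nabla F(\mathbf{x}_k)\|^2$ are nonnegative, Tonelli's theorem gives
\[
E\Bigl[\sum_{k=0}^\infty a_k \|\nabla F(\mathbf{x}_k)\|^2\Bigr] = \sum_{k=0}^\infty a_k E(\|\nabla F(\mathbf{x}_k)\|^2) < \infty,
\]
so $\sum_{k=0}^\infty a_k \|\nabla F(\mathbf{x}_k)\|^2 < \infty$ almost surely. The main conceptual obstacle is arranging the bound so that the stochastic quadratic error is tamed; this is exactly where the normalization $\vf \geq \Phi$ (combined with boundedness of $\{\mathbf{x}_k\}$) is essential, replacing Bottou's extra assumptions.
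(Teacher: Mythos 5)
Your proposal is correct and follows essentially the same route as the paper: the descent inequality from Lemma \ref{lemma-lipschitz}, the conditional-expectation identity $E[\nabla_\mathbf{x} f(\mathbf{x}_k,\mathbf{y}_k)\mid\mathbf{x}_k]=\nabla F(\mathbf{x}_k)$, the bound $\|\nabla_\mathbf{x} f(\mathbf{x}_k,\mathbf{y}_k)\|\leq\vf$ from Lemma \ref{lemma-x-bounded}, telescoping, and boundedness of $F$ on $B^n_{R_1}(\mathbf{0})$. The only differences are cosmetic (keeping the constant $M\Phi^2/(2\vf^2)$ rather than simplifying to $M/2$, and naming Tonelli for the final almost-sure step, which the paper leaves implicit).
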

\begin{proof}
By Lemma \ref{lemma-x-bounded}, we know that $\mathbf{x}_k < R_1$ and therefore $\|\nabla_\mathbf{x} f(\mathbf{x}_k,\mathbf{y}_k)\| \leq \vf$. Let $M$ be as in Lemma \ref{lemma-lipschitz}. We have that
\begin{equation}\label{eq-inequality-F-induction}
F(\mathbf{x}_{k+1}) \leq F(\mathbf{x}_{k}) - \frac{a_k}{\vf} \nabla F(\mathbf{x}_k)^T\nabla_\mathbf{x} f(\mathbf{x}_k,\mathbf{y}_k) + \frac{a_k^2}{2} M.
\end{equation}
Taking the expectation on both sides of Inequality \eqref{eq-inequality-F-induction}, we get 
\[
E(F(\mathbf{x}_{k+1})) \leq E(F(\mathbf{x}_{k})) - \frac{a_k}{\vf} E(\nabla F(\mathbf{x}_k)^T\nabla_\mathbf{x} f(\mathbf{x}_k,\mathbf{y}_k)) + \frac{a_k^2}{2} M.
\]
But 
\[
E(\nabla F(\mathbf{x}_k)^T\nabla_\mathbf{x} f(\mathbf{x}_k,\mathbf{y}_k)) = E(E(\nabla F(\mathbf{x}_k)^T\nabla_\mathbf{x} f(\mathbf{x}_k,\mathbf{y}_k)~|~\mathbf{x}_k)) = E(\|\nabla F(\mathbf{x}_k)\|^2). 
\]
So 
\begin{equation}\label{eq-inequality-F-exp-induction}
E(F(\mathbf{x}_{k+1})) \leq E(F(\mathbf{x}_{k})) - \frac{a_k}{\vf} E(\|\nabla F(\mathbf{x}_k)\|^2) + \frac{a_k^2}{2} M.
\end{equation}
Summing Inequality \eqref{eq-inequality-F-exp-induction} from $0$ to $k$, we get that
\begin{equation}\label{eq-inequality-F-exp-induction-sum}
E(F(\mathbf{x}_{k+1})) \leq F(\mathbf{x}_{0}) - \frac{1}{\vf}\sum_{j=0}^k  a_j E(\|\nabla F(\mathbf{x}_j)\|^2)+ \frac{M}{2}\sum_{j=0}^k a_j^2.
\end{equation}
By Lemma \ref{lemma-x-bounded} and Assumption (\ref{assumption-F-bounded}), there is a $B>0$ such that $|F(\mathbf{x}_{k})| \leq B$ for all $k\geq 0$. Then Inequality \eqref{eq-inequality-F-exp-induction-sum} implies that
\[
\sum_{j=0}^k  a_j E(\|\nabla F(\mathbf{x}_j)\|^2) \leq \vf\left(B + F(\mathbf{x}_{0}) + \frac{M}{2}\sum_{j=0}^k a_j^2\right).
\]
Since $\sum_{k=0}^\infty a_k^2$ converges, this inequality shows that $\sum_{k=0}^\infty a_k E(\|\nabla F(\mathbf{x}_k)\|^2)$ converges. It then follows that $\sum_{k=0}^\infty a_k \|\nabla F(\mathbf{x}_k)\|^2$ is finite with probability $1$. In other words, $\sum_{k=0}^\infty a_k \|\nabla F(\mathbf{x}_k)\|^2$ converges almost surely.
\end{proof}

\begin{lemma}\label{lemma-gradient-mixed-square-sum-converge}
Both $\sum_{k=0}^\infty a_k \nabla F(\mathbf{x}_k)^T\nabla_\mathbf{x} f(\mathbf{x}_k,\mathbf{y}_k)$ and $\lim_{k\rightarrow\infty} F(\mathbf{x}_k)$ converge almost surely.
\end{lemma}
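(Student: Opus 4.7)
The plan is to write the first series as a convergent series plus a martingale. Let $\mathcal{F}_k := \sigma(\mathbf{y}_0, \ldots, \mathbf{y}_{k-1})$ (with $\mathcal{F}_0$ trivial), so that $\mathbf{x}_k$ is $\mathcal{F}_k$-measurable while $\mathbf{y}_k$ is independent of $\mathcal{F}_k$. Set
\[
D_k := a_k \left(\nabla F(\mathbf{x}_k)^T \nabla_\mathbf{x} f(\mathbf{x}_k, \mathbf{y}_k) - \|\nabla F(\mathbf{x}_k)\|^2\right).
\]
The identity $\nabla F(\mathbf{x}) = \int_{B^m_\rho(\mathbf{0})} \nabla_\mathbf{x} f(\mathbf{x}, \ast)\, d\mu$ established in the proof of Lemma \ref{lemma-lipschitz}, combined with the independence of $\mathbf{y}_k$ from $\mathcal{F}_k$, gives $E(D_k \mid \mathcal{F}_k) = 0$. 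Hence $M_N := \sum_{k=0}^{N-1} D_k$ is a martingale with respect to $\{\mathcal{F}_N\}$.

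To establish almost sure convergence of $M_N$, I would apply the $L^2$-bounded martingale convergence theorem. By Lemma \ref{lemma-x-bounded}, $\|\nabla_\mathbf{x} f(\mathbf{x}_k, \mathbf{y}_k)\| \leq \Phi \leq \vf$; and since $\|\nabla F(\mathbf{x}_k)\| \leq \int \|\nabla_\mathbf{x} f(\mathbf{x}_k, \ast)\|\, d\mu \leq \Phi$ as well, one has $|D_k| \leq 2 a_k \Phi^2$. Hence $\sum_k E(D_k^2) \leq 4\Phi^4 \sum_k a_k^2 < \infty$, so $M_N$ converges almost surely. Combined with the almost sure convergence of $\sum_k a_k \|\nabla F(\mathbf{x}_k)\|^2$ from Lemma \ref{lemma-gradient-square-sum-converge}, this yields the almost sure convergence of $\sum_{k=0}^\infty a_k \nabla F(\mathbf{x}_k)^T \nabla_\mathbf{x} f(\mathbf{x}_k, \mathbf{y}_k)$.

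For the convergence of $F(\mathbf{x}_k)$, I would apply the descent inequality from Lemma \ref{lemma-lipschitz} to $\mathbf{x}_{k+1}$ and $\mathbf{x}_k$, yielding $F(\mathbf{x}_{k+1}) - F(\mathbf{x}_k) \leq \epsilon_k$ with
\[
\epsilon_k := -\frac{a_k}{\vf} \nabla F(\mathbf{x}_k)^T \nabla_\mathbf{x} f(\mathbf{x}_k, \mathbf{y}_k) + \frac{M a_k^2}{2}.
\]
The first assertion of this lemma together with $\sum a_k^2 < \infty$ shows that $\sum_k \epsilon_k$ converges almost surely. The sequence $w_k := F(\mathbf{x}_k) - \sum_{j=0}^{k-1} \epsilon_j$ is then nonincreasing, and it is bounded below almost surely because $F(\mathbf{x}_k)$ is uniformly bounded (by Lemma \ref{lemma-x-bounded} and Assumption (\ref{assumption-F-bounded})) while the partial sums $\sum_{j=0}^{k-1} \epsilon_j$ have an almost sure limit. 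So $w_k$ converges almost surely, and therefore so does $F(\mathbf{x}_k) = w_k + \sum_{j=0}^{k-1} \epsilon_j$. The only real obstacle is correctly setting up the filtration so that the martingale identity $E(D_k \mid \mathcal{F}_k) = 0$ holds; once this is in place, the boundedness supplied by Lemma \ref{lemma-x-bounded} makes every subsequent estimate routine.
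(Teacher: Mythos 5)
Your proposal is correct and follows essentially the same route as the paper: your $D_k$ is exactly the paper's $a_k u_k$, the $L^2$-bounded martingale argument matches the paper's variance computation for $z_k=\sum_{j=0}^k a_ju_j$, and your monotone sequence $w_k$ differs from the paper's $v_k$ only by the constant $\frac{1}{\vf}\sum_{j=0}^\infty a_j \nabla F(\mathbf{x}_j)^T\nabla_\mathbf{x} f(\mathbf{x}_j,\mathbf{y}_j)-\frac{M}{2}\sum_{j=0}^\infty a_j^2$. No gaps.
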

\begin{proof}
Set 
\begin{equation}\label{eq-def-u}
u_k = \nabla F(\mathbf{x}_k)^T(\nabla_\mathbf{x} f(\mathbf{x}_k,\mathbf{y}_k)-\nabla F(\mathbf{x}_k)).
\end{equation}
Consider the sequence $\{z_k=\sum_{j=0}^k a_j u_j\}$ of random variables. By Lemma \ref{lemma-x-bounded} and Assumption (\ref{assumption-F-bounded}), there is a constant $U$ such that $|u_k|\leq U$ for all $k$. Note that each $\mathbf{x}_k$ is determined by $\mathbf{y}_0,\dots, \mathbf{y}_{k-1}$ and is independent of $\mathbf{y}_k$ and that $E(\nabla_\mathbf{x} f(\mathbf{x}_k,\mathbf{y}_k)~|~ \mathbf{x}_k) = \nabla F(\mathbf{x}_k)$. So $E(u_k~|~\mathbf{y}_0,\dots, \mathbf{y}_{k-1})=0$ and $E(z_k~|~\mathbf{y}_0,\dots, \mathbf{y}_{k-1}) = z_{k-1}$. This shows that $\{z_k\}$ is a Martingale relative to $\{\mathbf{y}_k\}$. For any $k\geq 0$, we have that
\[
E(z_k)=E(z_{k-1})+a_kE(u_k) = E(z_{k-1})+a_k E(E(u_k~|~\mathbf{y}_0,\dots, \mathbf{y}_{k-1})) = E(z_{k-1}) =\cdots = E(z_{0})=0.
\]
So the variance of $z_k$ satisfies
\begin{eqnarray*}
Var(z_k) & = & E(z_k^2)= E((\sum_{j=0}^k a_j u_j)^2) = E(a_k^2u_k^2+ (\sum_{j=0}^{k-1} a_j u_j)^2 + 2a_k u_k(\sum_{j=0}^{k-1} a_j u_j))\\
& = & E(a_k^2u_k^2) + Var(z_{k-1}) +2a_k E(u_k(\sum_{j=0}^{k-1} a_j u_j))  \\
& = & E(a_k^2u_k^2) + Var(z_{k-1}) +2a_k E(E(u_k(\sum_{j=0}^{k-1} a_j u_j)~|~\mathbf{y}_0,\dots, \mathbf{y}_{k-1})) \\
& = & E(a_k^2u_k^2) + Var(z_{k-1}) \leq a_k^2 U^2 + Var(z_{k-1})
\end{eqnarray*}
Summing from $0$ to $k$, we get $Var(z_k) \leq U^2\sum_{j=0}^k a_j^2$ for all $k$. But $\sum_{k=0}^\infty a_k^2$ converges. So $\{Var(z_k)\}$ is a bounded sequence. Thus, by the Martingale Convergence Theorem, $\sum_{k=0}^\infty a_k u_k = \lim_{k\rightarrow\infty} z_k$ converges almost surely. By Lemma \ref{lemma-gradient-square-sum-converge}, $\sum_{k=0}^\infty a_k \|\nabla F(\mathbf{x}_k)\|^2$ also converges almost surely. Therefore, 
\[
\sum_{k=0}^\infty a_k \nabla F(\mathbf{x}_k)^T\nabla_\mathbf{x} f(\mathbf{x}_k,\mathbf{y}_k) = \sum_{k=0}^\infty a_k u_k + \sum_{k=0}^\infty a_k \|\nabla F(\mathbf{x}_k)\|^2
\] 
converges almost surely.

Next we consider $\lim_{k\rightarrow\infty} F(\mathbf{x}_k)$. Assume that $\sum_{k=0}^\infty a_k \nabla F(\mathbf{x}_k)^T\nabla_\mathbf{x} f(\mathbf{x}_k,\mathbf{y}_k)$ converges. By Inequality \eqref{eq-inequality-F-induction}, one can see that
\[
\{v_k := F(\mathbf{x}_{k}) - \frac{1}{\vf} \sum_{j=k}^\infty a_j \nabla F(\mathbf{x}_j)^T\nabla_\mathbf{x} f(\mathbf{x}_j,\mathbf{y}_j) + \frac{M}{2} \sum_{j=k}^\infty a_j^2\}
\]
is a decreasing sequence.  By Lemma \ref{lemma-x-bounded} and Assumption (\ref{assumption-F-bounded}), $\{F(\mathbf{x}_{k})\}$ is a bounded sequence. Since $\sum_{k=0}^\infty a_k \nabla F(\mathbf{x}_k)^T\nabla_\mathbf{x} f(\mathbf{x}_k,\mathbf{y}_k)$ and $\sum_{k=0}^\infty a_k^2$ both converge, $\{\sum_{j=k}^\infty a_j \nabla F(\mathbf{x}_j)^T\nabla_\mathbf{x} f(\mathbf{x}_j,\mathbf{y}_j)\}$ and $\{\sum_{j=k}^\infty a_j^2\}$ are also bounded sequences. This shows that $\{v_k\}$ is a bounded decreasing sequence. Therefore, $\lim_{k\rightarrow\infty} v_k$ converges. But $\lim_{k\rightarrow\infty} \sum_{j=k}^\infty a_j \nabla F(\mathbf{x}_j)^T\nabla_\mathbf{x} f(\mathbf{x}_j,\mathbf{y}_j) = \lim_{k\rightarrow\infty} \sum_{j=k}^\infty a_j^2 =0$. So $\lim_{k\rightarrow\infty} F(\mathbf{x}_k) = \lim_{k\rightarrow\infty} v_k$ converges. This proves that $\lim_{k\rightarrow\infty} F(\mathbf{x}_k)$ converges if $\sum_{k=0}^\infty a_k \nabla F(\mathbf{x}_k)^T\nabla_\mathbf{x} f(\mathbf{x}_k,\mathbf{y}_k)$ converges. But the latter converges almost surely. So does the former.
\end{proof}

\begin{lemma}\label{lemma-nabla-F-converge}
$\lim_{k\rightarrow\infty} \nabla F(\mathbf{x}_k) =0$ almost surely.
\end{lemma}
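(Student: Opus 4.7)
The plan is to combine the almost-sure convergence of $\sum_k a_k \|\nabla F(\mathbf{x}_k)\|^2$ from Lemma \ref{lemma-gradient-square-sum-converge} with a ``slow variation'' estimate on $\nabla F(\mathbf{x}_k)$ coming from the update rule and the Lipschitz bound of Lemma \ref{lemma-lipschitz}. The entire argument is pathwise on the full-measure event where $\sum_k a_k \|\nabla F(\mathbf{x}_k)\|^2$ is finite, so I do not need any further probabilistic input beyond what is already in Lemma \ref{lemma-gradient-square-sum-converge}.

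First I would show that $\liminf_{k\to\infty} \|\nabla F(\mathbf{x}_k)\| = 0$ almost surely: if, on an event of positive probability, there were a $\delta > 0$ with $\|\nabla F(\mathbf{x}_k)\|^2 \geq \delta$ for all sufficiently large $k$, the hypothesis $\sum_k a_k = \infty$ would force $\sum_k a_k \|\nabla F(\mathbf{x}_k)\|^2 = \infty$, contradicting Lemma \ref{lemma-gradient-square-sum-converge}. Next I would record the slow-variation estimate: Lemma \ref{lemma-x-bounded} together with $\vf \geq \Phi$ gives $\|\nabla_\mathbf{x} f(\mathbf{x}_k,\mathbf{y}_k)\| \leq \vf$, so the update rule \eqref{eq-def-x-k} yields $\|\mathbf{x}_{k+1}-\mathbf{x}_k\| \leq a_k$; since both $\mathbf{x}_k$ and $\mathbf{x}_{k+1}$ lie in $B^n_{R_1}(\mathbf{0})$, Lemma \ref{lemma-lipschitz} upgrades this to $\|\nabla F(\mathbf{x}_{k+1})-\nabla F(\mathbf{x}_k)\| \leq M a_k$. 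Note also that $a_k \to 0$, since $\sum a_k^2 < \infty$.

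The hard step, which I expect to be the main obstacle, is upgrading $\liminf = 0$ to $\lim = 0$. I would argue by contradiction: suppose on an event of positive probability $\limsup_k \|\nabla F(\mathbf{x}_k)\| \geq 2\epsilon > 0$ while $\liminf_k \|\nabla F(\mathbf{x}_k)\| = 0$. Then I can extract infinitely many disjoint ``upcrossing'' index blocks $[k_i,\ell_i]$ along which $\|\nabla F(\mathbf{x}_{k_i})\| < \epsilon$, $\|\nabla F(\mathbf{x}_{\ell_i})\| \geq 2\epsilon$, and $\|\nabla F(\mathbf{x}_k)\| \geq \epsilon$ for $k_i < k \leq \ell_i$. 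A telescoping application of $\|\nabla F(\mathbf{x}_{k+1})-\nabla F(\mathbf{x}_k)\| \leq M a_k$ forces
\[
\epsilon \;\leq\; \|\nabla F(\mathbf{x}_{\ell_i})-\nabla F(\mathbf{x}_{k_i})\| \;\leq\; M\sum_{k=k_i}^{\ell_i-1} a_k,
\]
so $\sum_{k=k_i}^{\ell_i-1} a_k \geq \epsilon/M$; using $a_k \to 0$ to absorb the boundary term for large $i$, we also get $\sum_{k=k_i+1}^{\ell_i} a_k \geq \epsilon/(2M)$. On each such block $\|\nabla F(\mathbf{x}_k)\|^2 \geq \epsilon^2$, so each block contributes at least $\epsilon^3/(2M)$ to $\sum_k a_k \|\nabla F(\mathbf{x}_k)\|^2$. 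Summing over the infinitely many disjoint blocks contradicts Lemma \ref{lemma-gradient-square-sum-converge}. Hence $\limsup_k \|\nabla F(\mathbf{x}_k)\| = 0$ almost surely, which gives the lemma; the main subtlety in carrying this out is the clean extraction of the upcrossing blocks and the book-keeping on their contributions, a standard Bertsekas--Tsitsiklis-type argument.
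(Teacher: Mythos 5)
Your proof is correct, and the final contradiction is obtained by a genuinely different (and leaner) route than the paper's. Both arguments share the same skeleton: first $\liminf_k\|\nabla F(\mathbf{x}_k)\|=0$ from $\sum_k a_k=\infty$ and Lemma \ref{lemma-gradient-square-sum-converge}; then, assuming $\limsup_k\|\nabla F(\mathbf{x}_k)\|>0$, extraction of disjoint upcrossing blocks on which the gradient norm climbs from below $\epsilon$ to above $2\epsilon$ (the paper uses thresholds $s/4$ and $s/2$), and the Lipschitz estimate $\|\nabla F(\mathbf{x}_{k+1})-\nabla F(\mathbf{x}_k)\|\leq Ma_k$ (valid since Lemma \ref{lemma-x-bounded} and $\vf\geq\Phi$ give $\|\mathbf{x}_{k+1}-\mathbf{x}_k\|\leq a_k$) to force each block to carry mass $\sum a_k\geq \epsilon/M$. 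Where you diverge is in how this lower bound is contradicted. The paper shows $\|\nabla F(\mathbf{x}_{p_i})\|\geq s/8$ at the block entrances and then feeds the descent inequality \eqref{eq-inequality-F-induction} through the blocks, invoking the almost-sure convergence of $F(\mathbf{x}_k)$ and of the martingale sum $\sum_k a_k u_k$ from Lemma \ref{lemma-gradient-mixed-square-sum-converge} to conclude that the block masses tend to $0$. You instead observe that $\|\nabla F(\mathbf{x}_k)\|\geq\epsilon$ on the interior-plus-right-endpoint of each block, so each block contributes at least $\epsilon^3/(2M)$ to the convergent series $\sum_k a_k\|\nabla F(\mathbf{x}_k)\|^2$ (your handling of the left boundary term via $a_{k_i}\to 0$ is exactly the care needed here), and infinitely many disjoint blocks then contradict Lemma \ref{lemma-gradient-square-sum-converge} directly. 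This buys you independence from Lemma \ref{lemma-gradient-mixed-square-sum-converge}: your proof of this lemma needs only Lemmas \ref{lemma-x-bounded}, \ref{lemma-lipschitz} and \ref{lemma-gradient-square-sum-converge}, whereas the paper's needs the martingale machinery as well. Of course Lemma \ref{lemma-gradient-mixed-square-sum-converge} is still required for the first bullet of Theorem \ref{thm-sgd} (convergence of $F(\mathbf{x}_k)$), so nothing is saved globally, but your version of this particular lemma is more self-contained. Your probabilistic framing (work pathwise on the full-measure event where $\sum_k a_k\|\nabla F(\mathbf{x}_k)\|^2<\infty$) is the right one and matches the paper's.
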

\begin{proof}
By Lemmas \ref{lemma-gradient-square-sum-converge} and \ref{lemma-gradient-mixed-square-sum-converge}, $\sum_{k=0}^\infty a_k \|\nabla F(\mathbf{x}_k)\|^2$, $\sum_{k=0}^\infty a_k \nabla F(\mathbf{x}_k)^T\nabla_\mathbf{x} f(\mathbf{x}_k,\mathbf{y}_k)$ and $\lim_{k\rightarrow\infty} F(\mathbf{x}_k)$ all converge almost surely. So, to prove the current lemma, we only need to show that $\lim_{k\rightarrow\infty} \nabla F(\mathbf{x}_k) =0$ if $\sum_{k=0}^\infty a_k \|\nabla F(\mathbf{x}_k)\|^2$, $\sum_{k=0}^\infty a_k \nabla F(\mathbf{x}_k)^T\nabla_\mathbf{x} f(\mathbf{x}_k,\mathbf{y}_k)$ and $\lim_{k\rightarrow\infty} F(\mathbf{x}_k)$ all converge. In the rest of this proof, we assume that the latter three all converge.

First, we claim that $\liminf_{k\rightarrow\infty} \|\nabla F(\mathbf{x}_k)\| =0$. Otherwise, there would be a $t>0$ and $\kappa >0$ such that $\|\nabla F(\mathbf{x}_k)\| \geq t$ if $k \geq \kappa$. Then $\sum_{k=0}^\infty a_k \|\nabla F(\mathbf{x}_k)\|^2 \geq \sum_{k=\kappa}^\infty a_k \|\nabla F(\mathbf{x}_k)\|^2\geq t^2\sum_{k=\kappa}^\infty a_k =\infty$ since $\sum_{k=0}^\infty a_k =\infty$. This is a contradiction. Thus, $\liminf_{k\rightarrow\infty} \|\nabla F(\mathbf{x}_k)\| =0$.

Next, we claim that $\limsup_{k\rightarrow\infty} \|\nabla F(\mathbf{x}_k)\| =0$, too. We prove this claim by contradiction. Let us assume that $\limsup_{k\rightarrow\infty} \|\nabla F(\mathbf{x}_k)\| =s>0$. Let $M$ be as in Lemma \ref{lemma-lipschitz}. Recall that  $\Phi= \sup\{\|\nabla_\mathbf{x} f(\mathbf{x},\mathbf{y})\|~|~ \|\mathbf{x}\|<R_1, \|\mathbf{y}\|<\rho\}$ and $\vf \geq \Phi$. Since $\lim_{k\rightarrow\infty} a_k =0$, there is a $K>0$ such that $a_k M\leq \frac{s}{8}$ for $k>K$. Since $\liminf_{k\rightarrow\infty} \|\nabla F(\mathbf{x}_k)\| =0$ and $\limsup_{k\rightarrow\infty} \|\nabla F(\mathbf{x}_k)\| =s>0$, there exist two infinite sequences $\{p_i\}$ and $\{q_i\}$ of positive integers such that
\begin{itemize}
	\item $K<p_1<q_1<p_2<q_2\cdots<q_{i-1}<p_i<q_i<p_{i+1}<\cdots$,
	\item for $i=1,2\dots$, $\|\nabla f(\mathbf{x}_{p_i})\|<\frac{s}{4}$, $\|\nabla f(\mathbf{x}_{q_i})\|>\frac{s}{2}$ and  $\frac{s}{4}\leq\|\nabla f(\mathbf{x}_k)\|\leq \frac{s}{2}$ for $p_i<k<q_i$.
\end{itemize}
By Lemmas \ref{lemma-x-bounded} and \ref{lemma-lipschitz}, we have that, for all $i\geq 1$, 
\begin{eqnarray*}
\frac{s}{4} & < & \|\nabla f(\mathbf{x}_{q_i})\| - \|\nabla f(\mathbf{x}_{p_i})\| \leq \|\nabla f(\mathbf{x}_{q_i}) - \nabla f(\mathbf{x}_{p_i})\| \leq M\|\mathbf{x}_{q_i} - \mathbf{x}_{p_i}\| = M \|\sum_{k=p_i}^{q_i-1} \frac{a_k}{\vf} \nabla_\mathbf{x} f(\mathbf{x}_k, \mathbf{y}_k)\| \\
& \leq & M \sum_{k=p_i}^{q_i-1} \frac{a_k}{\vf} \|\nabla_\mathbf{x} f(\mathbf{x}_k, \mathbf{y}_k)\| \leq \frac{M\Phi}{\vf} \sum_{k=p_i}^{q_i-1} a_k \leq M \sum_{k=p_i}^{q_i-1} a_k.
\end{eqnarray*}
This shows that 
\begin{equation}\label{eq-sum-a-k-bound}
\sum_{k=p_i}^{q_i-1} a_k \geq \frac{s}{4M} \text{ for all } i. 
\end{equation}
Using Lemmas \ref{lemma-x-bounded} and \ref{lemma-lipschitz} again, we get that, for each $p_i$,
\begin{eqnarray*}
\frac{s}{4} - \|\nabla F(\mathbf{x}_{p_i})\| & \leq & \|\nabla F(\mathbf{x}_{p_i+1})\|-\|\nabla F(\mathbf{x}_{p_i})\| \leq \|\nabla F(\mathbf{x}_{p_i+1})-\nabla F(\mathbf{x}_{p_i})\| \leq M\|\mathbf{x}_{p_i+1} - \mathbf{x}_{p_i}\| \\
& = & \frac{a_{p_i}M}{\vf}\|\nabla_\mathbf{x} f(\mathbf{x}_{p_i}, \mathbf{y}_{p_i})\| \leq \frac{a_{p_i}M\Phi}{\vf} \leq \frac{s}{8}
\end{eqnarray*}
since $p_i>K$. Thus, $\|\nabla F(\mathbf{x}_{p_i})\|\geq \frac{s}{8}$. By Inequality \eqref{eq-inequality-F-induction}, we have
\begin{eqnarray*}
F(\mathbf{x}_{q_i}) & \leq & F(\mathbf{x}_{p_i}) -\sum_{k=p_i}^{q_i-1}\frac{a_k}{\vf} u_k - \sum_{k=p_i}^{q_i-1}\frac{a_k}{\vf} \|\nabla F(\mathbf{x}_{k})\|^2 + \sum_{k=p_i}^{q_i-1} \frac{a_k^2}{2}M, \\
& \leq & F(\mathbf{x}_{p_i}) -\sum_{k=p_i}^{q_i-1}\frac{a_k}{\vf} u_k - \sum_{k=p_i}^{q_i-1}\frac{a_k}{\vf} \frac{s^2}{64} + \sum_{k=p_i}^{q_i-1} \frac{a_k^2}{2}M
\end{eqnarray*}
where $u_k$ is defined in Equation \eqref{eq-def-u}. Thus,
\[
0< \sum_{k=p_i}^{q_i-1}a_k \leq \frac{64\vf}{s^2}\left(F(\mathbf{x}_{p_i}) - F(\mathbf{x}_{q_i}) -\frac{1}{\vf}\sum_{k=p_i}^{q_i-1} a_k u_k + \frac{M}{2}\sum_{k=p_i}^{q_i-1} a_k^2 \right)
\]
But $\lim_{k\rightarrow\infty} F(\mathbf{x}_k)$, $\sum_{k=0}^{\infty} a_k u_k$ and $\sum_{k=0}^{\infty} a_k^2$ all converge. So the above inequality implies that $\lim_{i\rightarrow\infty} \sum_{k=p_i}^{q_i-1}a_k = 0$, which contradicts Inequality \eqref{eq-sum-a-k-bound}. Hence, $\limsup_{k\rightarrow\infty} \|\nabla F(\mathbf{x}_k)\| =0$. 

Altogether, we have proved that $\lim_{k\rightarrow\infty} \|\nabla F(\mathbf{x}_k)\| =0$ and therefore $\lim_{k\rightarrow\infty} \nabla F(\mathbf{x}_k) =0$. This completes the proof of the lemma.
\end{proof}

\begin{proof}[Proof of Theorem \ref{thm-sgd}]
It is clear that Theorem \ref{thm-sgd} follows from Lemmas \ref{lemma-gradient-mixed-square-sum-converge} and \ref{lemma-nabla-F-converge}.
\end{proof}

\section{Augmented Back-Propagation Algorithm for Feed-Forward Networks}

In this section, we demonstrate that Theorem \ref{thm-sgd} can be applied to modify the back-propagation algorithm of feed-forward networks to force the algorithm to converge.

\subsection{Feed-forward networks} Recall that, as in Figure \ref{fig-feed-forward}, a feed-forward network of $l$ hidden layers is a function represented by a directed graph satisfying:
\begin{itemize}
  \item All nodes (vertices) are arranged in $l+2$ ordered layers. For minor notational convenience, we call the bottom layer the $0$-th layer, the layer above it the first layer,$\dots$, the top layer the $(l+1)$-th layer. Denote by $n_i$ the number of nodes in the $i$th layer.  
	\item The top layer ($(l+1)$-th layer) of nodes is call the input layer. Nodes in this layer, represented by circles in Figure \ref{fig-feed-forward}, take in input data. We denote by $x_j$ the input value for the $j$-th input node.
	\item The bottom layer ($0$-th layer) of nodes is called the output layer. Nodes in this layer, represented by triangles in Figure \ref{fig-feed-forward}, out put the final results.
	\item The layers of nodes between the input and output layers are called hidden layers. Each node in a hidden layer, represented by a box in Figure \ref{fig-feed-forward}, is a computation unit and is labeled by a differentiable function. For $i=1,\dots,l$ and $j=1,\dots,n_i$, we denote by $\s^i_j$ the function labeling the $j$-th node of the $i$-th layer. For notational simplicity, we use the convention that $\s^i_j(x)=x$ for $i=0$ or $l+1$ and all $x\in \R$.
	\item Arrows all point from a node in one layer to a node in the layer below. For any pair of nodes in adjacent layers, there is a single arrow pointing from the node in the upper layer to the one in the lower layer. Each arrow in this graph is labeled by a scalar, called the weight. We denote by $\lm^i_{j,j'}$ the weight of the arrow pointing from the $j$-th node in the $i$-th layer to the $j'$-th node in the $(i-1)$-th layer. Each arrow acts as a directed channel that multiplies the datum from its starting node by its weight and sends the product to its end node. 
	\item Each node in a hidden layer applies the function labeling it to the weighted sum of data from all of its incoming arrows, which generates its own output datum.
\end{itemize}
In other words, the feed-forward network in Figure \ref{fig-feed-forward} represents the function $F:\R^{n_{l+1}}\times \R^N\rightarrow \R^{n_0}$ given by $F([x_1,\dots,x_{n_{l+1}}]^T, \Lambda)=[z^0_1,\dots,z^0_{n_{0}}]^T$, where 
\begin{itemize}
	\item $N=\sum_{i=1}^{l+1} n_{i}n_{i-1}$,
	\item $\Lambda =[\lm^i_{j,j'}~|~i=1,\dots,l+1,~j=1,\dots,n_i,~j'=1,\dots, n_{i-1}] \in \R^N$,
	\item $Z=[z^i_j]$ is given by the following forward propagation
\begin{equation}\label{eq-function-forward-propagation}
\begin{cases}
z^{l+1}_j = x_j &\text{for } j=1,\dots,n_{l+1}, \\
z^i_j = \s^i_j (\sum_{j'=1}^{n_{i+1}} \lm^{i+1}_{j',j} z^{i+1}_{j'}) & \text{for } i=l,l-1,\dots,0 \text{ and } j =1,\dots,n_i. 
\end{cases}
\end{equation}
\end{itemize}

\begin{figure}[ht]
\[
\xymatrix{
x_1 \ar@{=}[d] && x_2 \ar@{=}[d] && \cdots \ar@{=}[d] && x_{n_{l+1}} \ar@{=}[d] \\
\bigcirc \ar^{\lambda^{l+1}_{1,1}}[d] \ar[drr] \ar[drrrr] \ar[drrrrrr] &&  \bigcirc \ar[dll] \ar[d] \ar[drr] \ar[drrrr] && \cdots \ar[dllll] \ar[dll] \ar[d] \ar[drr] &&  \bigcirc\ar[dllllll] \ar[dllll] \ar[dll] \ar^{\lambda^{l+1}_{n_{l+1},n_l}}[d] \\
\boxed{\sigma^{l}_{1}} \ar^{\lambda^{l}_{1,1}}[d] \ar[drr] \ar[drrrr] \ar[drrrrrr] &&  \boxed{\sigma^{l}_{2}} \ar[dll] \ar[d] \ar[drr] \ar[drrrr] && \cdots \ar[dllll] \ar[dll] \ar[d] \ar[drr] &&  \boxed{\sigma^{l}_{n_{l}}} \ar[dllllll] \ar[dllll] \ar[dll] \ar^{\lambda^{l}_{n_{l},n_{l-1}}}[d] \\
\cdots \ar[d] \ar[drr] \ar[drrrr] \ar[drrrrrr] &&  \cdots \ar[dll] \ar[d] \ar[drr] \ar[drrrr] && \cdots \ar[dllll] \ar[dll] \ar[d] \ar[drr] &&  \cdots \ar[dllllll] \ar[dllll] \ar[dll] \ar[d] \\
\boxed{\sigma^{1}_{1}} \ar^{\lambda^{1}_{1,1}}[d] \ar[drr] \ar[drrrr] \ar[drrrrrr] &&  \boxed{\sigma^{1}_{2}} \ar[dll] \ar[d] \ar[drr] \ar[drrrr] && \cdots \ar[dllll] \ar[dll] \ar[d] \ar[drr] &&  \boxed{\sigma^{1}_{n_{1}}} \ar[dllllll] \ar[dllll] \ar[dll] \ar^{\lambda^{1}_{n_{1},n_{0}}}[d] \\
\triangle \ar@{=}[d]  &&  \triangle \ar@{=}[d] && \cdots \ar@{=}[d] &&  \triangle \ar@{=}[d] \\
z^{0}_{1}   &&  z^{0}_{2}  && \cdots  &&  z^{0}_{n_{0}} 
} 
\]
\caption{A Feed-Forward Network}\label{fig-feed-forward} 
\end{figure}
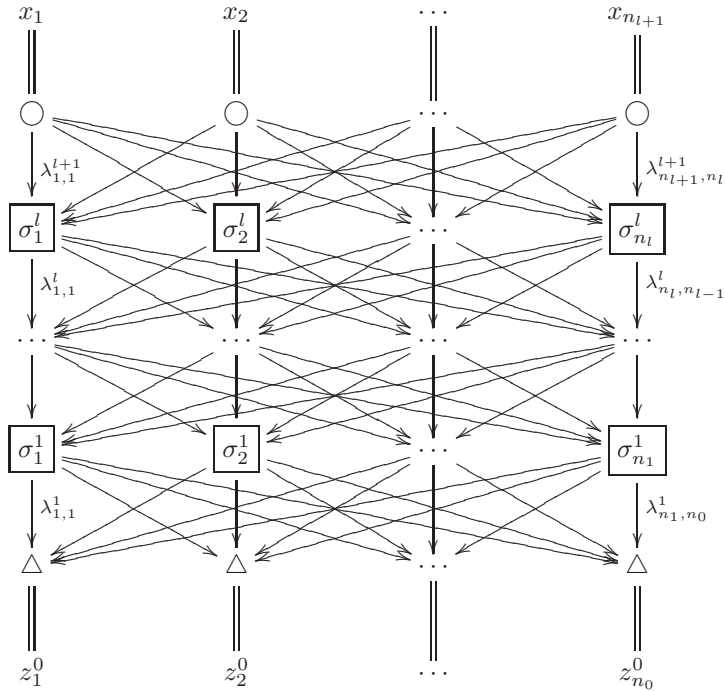

Let $\mathbf{x}=[x_1,\dots,x_{n_{l+1}}]^T\in \R^{n_{l+1}}$, $\Lambda =[\lm^i_{j,j'}~|~i=1,\dots,l+1,~j=1,\dots,n_i,~j'=1,\dots, n_{i-1}] \in \R^N$, $\mathbf{y}=[y_1,\dots,y_{n_{0}}]^T\in \R^{n_0}$ and $\mathbf{z}^0=[z^0_1,\dots,z^0_{n_{0}}]^T=F(\mathbf{x},\Lambda)\in \R^{n_0}$. For a fixed differentiable error function $E:\R^{n_0} \times \R^{n_0} \rightarrow \R$. The partial derivatives of $E(F(\mathbf{x},\Lambda),\mathbf{y})=E(\mathbf{z}^0,\mathbf{y})$ are given by the following backward propagation
\begin{equation}\label{eq-derivative-backward-propagation}
\begin{cases}
\frac{\partial}{\partial z^i_j} E(\mathbf{z}^0,\mathbf{y}) = \sum_{j'=1}^{n_{i-1}} \frac{\partial}{\partial z^{i-1}_{j'}} E(\mathbf{z}^0,\mathbf{y}) \cdot (\sigma^{i-1}_{j'})'(\sum_{p=1}^{n_i} \lambda^i_{p,j'}z^i_{p}) \cdot \lambda^i_{j,j'}, \\
\frac{\partial}{\partial \lm^i_{j,j'}} E(\mathbf{z}^0,\mathbf{y}) = \frac{\partial}{\partial z^{i-1}_{j'}} E(\mathbf{z}^0,\mathbf{y}) \cdot (\sigma^{i-1}_{j'})'(\sum_{p=1}^{n_i} \lambda^i_{p,j'}z^i_{p}) \cdot z^i_{j}.
\end{cases}
\end{equation}

Let $g:\R^{n_{l+1}}\rightarrow \R^{n_0}$ be a function. Assume that:
\begin{itemize}
	\item The sequence $\{(\mathbf{x}(k),\mathbf{y}(k))\}_{k=0}^\infty \subset \R^{n_{l+1}}\times \R^{n_0}$ satisfies $g(\mathbf{x}(k)) = \mathbf{y}(k)$.
	\item $\Lambda(0) =[\lm^i_{j,j'}(0)] \in \R^N$.
	\item $\{\eta_k\}_{k=0}^\infty \subset \R_+$.
\end{itemize}

The classical back-propagation algorithm in this set up is then given by the following steps:
\begin{itemize}
	\item Forward propagation:
	\begin{equation}\label{eq-forward-propagation}
\begin{cases}
z^{l+1}_j(k) = x_j(k), \\
z^i_j(k) = \s^i_j (\sum_{j'=1}^{n_{i+1}} \lm^{i+1}_{j',j}(k) z^{i+1}_{j'}(k)).
\end{cases}
\end{equation}
  \item Backward propagation:
	\begin{equation}\label{eq-backward-propagation}
\begin{cases}
\left(\frac{\partial}{\partial z^i_j} E(\mathbf{z}^0,\mathbf{y}(k))\right)\big{|}_{Z=Z(k),\Lambda=\Lambda(k)} \\
= \sum_{j'=1}^{n_{i-1}} \left(\frac{\partial}{\partial z^{i-1}_{j'}} E(\mathbf{z}^0,\mathbf{y}(k))\right)\big{|}_{Z=Z(k),\Lambda=\Lambda(k)}  \cdot (\sigma^{i-1}_{j'})'(\sum_{p=1}^{n_i} \lambda^i_{p,j'}(k)z^i_{p}(k)) \cdot \lambda^i_{j,j'}(k), \\
\\
\left(\frac{\partial}{\partial \lm^i_{j,j'}} E(\mathbf{z}^0,\mathbf{y}(k))\right)\big{|}_{Z=Z(k),\Lambda=\Lambda(k)} \\
= \left(\frac{\partial}{\partial z^{i-1}_{j'}} (E(\mathbf{z}^0,\mathbf{y}(k)))\right)\big{|}_{Z=Z(k),\Lambda=\Lambda(k)} \cdot (\sigma^{i-1}_{j'})'(\sum_{p=1}^{n_i} \lambda^i_{p,j'}(k)z^i_{p}(k)) \cdot z^i_{j}(k),
\end{cases}
\end{equation}
  \item Weight update:
	\begin{equation}\label{eq-weight-update}
	\lm^i_{j,j'}(k+1) = \lm^i_{j,j'}(k) - \eta_k \left(\frac{\partial}{\partial \lm^i_{j,j'}} E(\mathbf{z}^0,\mathbf{y}(k))\right)\big{|}_{Z=Z(k),\Lambda=\Lambda(k)}.
	\end{equation}
\end{itemize}
In the above, $\Lambda(k) = [\lm^i_{j,j'}(k)]$ and $Z(k) =[z^i_j(k)]$.

\subsection{Adequate augmentation.} The classical back-propagation algorithm has many interesting and important applications. But this algorithm is not convergent in general. We will use Theorem \ref{thm-sgd} to modify it and get a convergent algorithm. For this purpose, we introduce the concept of \textbf{adequate augmentation}. Before stating the definition, let us introduce some notations first. We view $\Lambda$ as a column vector in $\R^N$ and write
\begin{eqnarray}
\|\Lambda\|& = & \sqrt{\sum_{i=1}^{l+1}\sum_{j=1}^{n_i}\sum_{j'=1}^{n_{i-1}} (\lm^i_{j,j'})^2}, \\
\|\nabla_\Lambda P(\mathbf{x},\Lambda) \| & = & \sqrt{\sum_{i=1}^{l+1}\sum_{j=1}^{n_i}\sum_{j'=1}^{n_{i-1}} \left(\frac{\partial}{\partial \lm^i_{j,j'}}P(\mathbf{x},\Lambda)\right)^2}, \\
\Lambda^T \nabla_\Lambda P(\mathbf{x},\Lambda) & = & \sum_{i=1}^{l+1}\sum_{j=1}^{n_i}\sum_{j'=1}^{n_{i-1}} \lm^i_{j,j'}\frac{\partial}{\partial \lm^i_{j,j'}}P(\mathbf{x},\Lambda)
\end{eqnarray}
for any function $P:\R^{n_{l+1}}\times \R^N \rightarrow \R$.

\begin{definition}\label{def-adequate}
For a given choice of the error function $E(\mathbf{z},\mathbf{y})$, a function $\alpha:\R^N\rightarrow \R$ is called an \textbf{adequate augmentation} with respect to $E,F,g$ if
\begin{itemize}
	\item $\alpha$ is differentiable,
	\item $\nabla \alpha$ is locally Lipschitz, that is, for any $r>0$ there is a $c_r>0$ such that $\|\nabla \alpha (\Lambda) -\nabla \alpha (\Lambda')\| \leq c_r\|\Lambda - \Lambda'\|$ whenever $\|\Lambda\|,\|\Lambda'\|\leq r$,
	\item for every $\rho>0$, there is an $R_\rho>0$ such that $\Lambda^T \nabla_\Lambda \left(E(F(\mathbf{x},\Lambda),g(\mathbf{x}))+\alpha(\Lambda)\right)\geq 0$ whenever $\|\mathbf{x}\|\leq \rho$ and $\|\Lambda\| \geq R_\rho$.
\end{itemize}
\end{definition}

With some fairly nonrestrictive assumptions, it is not hard to find adequate augmentations for a given error function. We give some examples of adequate augmentations for the error function $E(\mathbf{z},\mathbf{y})=\|\mathbf{z}-\mathbf{y}\|^2$ below. The key to our constructions is the following lemma.

\begin{lemma}\label{lemma-error-bound}
Fix $E(\mathbf{z},\mathbf{y})=\|\mathbf{z}-\mathbf{y}\|^2$. Assume that 
\begin{itemize}
	\item the functions $\{\s^i_j\}$ are uniformly $C^1$-bounded, that is, there is an $M>0$ such that $\s^i_j(t)\leq M$ and $(\s^i_j)'(t)\leq M$ for all $i=1,\dots, l$, $j=1,\dots, n_i$ and $t\in\R$,
	\item the function $g(\mathbf{x})$ is locally bounded, that is, for every $\rho>0$, there is an $\Omega_\rho>0$ such that $\|g(\mathbf{x})\|\leq \Omega_\rho$ whenever $\|\mathbf{x}\|\leq \rho$.
\end{itemize}
Then, for every $\rho>0$, there is a $\Theta_\rho>0$ such that $\|\nabla_\Lambda E(F(\mathbf{x},\Lambda),g(\mathbf{x}))\| \leq \Theta_\rho (\|\Lambda\|^{l+1}+1)$ if $\|\mathbf{x}\|\leq \rho$.
\end{lemma}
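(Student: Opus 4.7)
The plan is to bound each scalar partial derivative $\partial E/\partial \lm^i_{j,j'}$ via the explicit back-propagation formula (\ref{eq-derivative-backward-propagation}), show it is a polynomial in $\|\Lambda\|$ of degree at most $l+1$, and then assemble the norm. The first ingredient is a bound on the forward-propagated values $z^i_j$. Since $\|\mathbf{x}\| \leq \rho$, we have $|z^{l+1}_j| = |x_j| \leq \rho$. For $i = 1,\dots,l$, the uniform $C^1$-bound gives $|z^i_j| = |\sigma^i_j(\cdots)| \leq M$. At the output layer $\sigma^0_j = \mathrm{id}$, so $z^0_j = \sum_{j'=1}^{n_1} \lm^1_{j',j}\, z^1_{j'}$ and hence $|z^0_j| \leq n_1 M \|\Lambda\|$. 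Consequently, using $\|g(\mathbf{x})\| \leq \Omega_\rho$, I would note that $\left|\partial E/\partial z^0_j\right| = |2(z^0_j - g_j(\mathbf{x}))| \leq 2(n_1 M \|\Lambda\| + \Omega_\rho)$, which is linear in $\|\Lambda\|$.

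Next, I would prove by induction on $i$ that $|\partial E/\partial z^i_j| \leq C_i (\|\Lambda\| + 1)^{i+1}$ for $i = 0,1,\dots,l$, using the first recursion in (\ref{eq-derivative-backward-propagation}). Each step introduces at most one extra factor of $\|\Lambda\|$, since $|(\sigma^{i-1}_{j'})'|$ is uniformly bounded (by $1$ if $i-1 = 0$, by $M$ otherwise) and $|\lm^i_{j,j'}| \leq \|\Lambda\|$; the base case $i=0$ is the linear estimate above. With these sensitivities in hand, the second recursion in (\ref{eq-derivative-backward-propagation}) yields
\[
\left|\frac{\partial E}{\partial \lm^i_{j,j'}}\right| = \left|\frac{\partial E}{\partial z^{i-1}_{j'}}\right| \cdot \left|(\sigma^{i-1}_{j'})'(\cdots)\right| \cdot |z^i_j| \leq C'_i (\|\Lambda\| + 1)^i
\]
for $i = 1,\dots,l+1$, since $|z^i_j| \leq \max(M,\rho)$ for all $i \geq 1$. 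The largest exponent is $l+1$, realized at the topmost layer of weights.

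Summing the squares of all $N$ weight-partials and taking the square root yields $\|\nabla_\Lambda E(F(\mathbf{x},\Lambda),g(\mathbf{x}))\| \leq C(\|\Lambda\| + 1)^{l+1}$, and the elementary inequality $(\|\Lambda\| + 1)^{l+1} \leq 2^l(\|\Lambda\|^{l+1} + 1)$ then produces a $\Theta_\rho$ for which the claimed estimate holds. There is no substantive obstacle in this argument; the only care required is to track that the output-layer activation being the identity introduces one factor of $\|\Lambda\|$ in the forward pass, each back-propagation step contributes exactly one more factor of $\|\Lambda\|$ (so the gradient with respect to the topmost weights $\lm^{l+1}_{\cdot,\cdot}$ has total degree $l+1$), and all constants depend only on $\rho$, $M$, $\Omega_\rho$, and the network sizes $n_0,\dots,n_{l+1}$.
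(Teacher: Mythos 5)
Your proposal is correct and follows essentially the same route as the paper: bound the forward-propagated values, then induct backward through the layers on $\partial E/\partial z^i_j$ (degree growing by one factor of $\|\Lambda\|$ per layer because of the weight appearing in the recursion), deduce the degree-$i$ bound on $\partial E/\partial \lm^i_{j,j'}$, and assemble the norm. The only cosmetic differences are that you start the induction at the output layer $i=0$ while the paper unrolls the first step and starts at $i=1$, and you carry $(\|\Lambda\|+1)^{i+1}$ instead of $\|\Lambda\|^{i+1}+1$, which are interchangeable up to constants.
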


\begin{proof}
For $i=1,\dots,l+1$, set $\Lambda^i =[\lm^i_{j,j'}~|~j=1,\dots,n_i,~j'=1,\dots, n_{i-1}] \in \R^{n_i n_{i-1}}$ and $\|\Lambda^i\|=\sqrt{\sum_{j=1}^{n_i}\sum_{j'=1}^{n_{i-1}} (\lm^i_{j,j'})^2}$. Fix a $\mathbf{y}\in\R^m$ satisfying $\|\mathbf{y}\|\leq \Omega_\rho$. We prove by induction that there are $\theta_\rho^1,\dots,\theta_\rho^{l+1}>0$ satisfying
\begin{equation}\label{eq-error-bound-induct}
\begin{cases}
\|\nabla_{\mathbf{z}^i} E(F(\mathbf{x},\Lambda),\mathbf{y})\| \leq \theta_\rho^i (\|\Lambda\|^{i+1}+1), \\
\|\nabla_{\Lambda^i} E(F(\mathbf{x},\Lambda),\mathbf{y})\| \leq \theta_\rho^i (\|\Lambda\|^{i}+1)
\end{cases}
\end{equation}
for $i=1,\dots,l+1$ and $\|\mathbf{x}\|\leq \rho$, where 
\begin{itemize}
	\item $\mathbf{z}^i=[z^i_1,\dots,z^i_{n_i}]^T$ and $\|\nabla_{\mathbf{z}^i} E(F(\mathbf{x},\Lambda),\mathbf{y}) \| = \sqrt{\sum_{j=1}^{n_i} \left(\frac{\partial}{\partial z^i_j}E(F(\mathbf{x},\Lambda),\mathbf{y})\right)^2}$,
	\item $\|\nabla_{\Lambda^i} E(F(\mathbf{x},\Lambda),\mathbf{y}) \| = \sqrt{\sum_{j=1}^{n_i}\sum_{j'=1}^{n_{i-1}} \left(\frac{\partial}{\partial \lm^i_{j,j'}}E(F(\mathbf{x},\Lambda),\mathbf{y})\right)^2}$.
\end{itemize}

For $i=1$, we have that 
\[
\frac{\partial}{\partial z^1_{j}} E(F(\mathbf{x},\Lambda),\mathbf{y}) = \sum_{j'=1}^{n_0} 2\lambda^1_{j,j'}(z^0_{j'}-y_{j'}) = \sum_{j'=1}^{n_0}2\lambda^1_{j,j'}((\sum_{p=1}^{n_1}\lm^1_{p,j'} z^1_p)-y_{j'})
\]
and
\[
\frac{\partial}{\partial \lambda^1_{j,j'}} E(F(\mathbf{x},\Lambda),\mathbf{y}) = \frac{\partial}{\partial \lambda^1_{j,j'}} E(\mathbf{z}^0,\mathbf{y}) = 2z^1_j(z^0_{j'}-y_{j'}) = 2z^1_j ((\sum_{p=1}^{n_1}\lm^1_{p,j'} z^1_p) -y_{j'}).
\]
So, for $\mathbf{x}\leq \rho$, we have that:
\begin{enumerate}[1.]
	\item \begin{eqnarray*}
|\frac{\partial}{\partial z^1_{j}} E(F(\mathbf{x},\Lambda),\mathbf{y})| & \leq &  \sum_{j'=1}^{n_0}2|\lambda^1_{j,j'}|((\sum_{p=1}^{n_1}|\lm^1_{p,j'}| M)+\Omega_\rho) \\
& \leq & \sum_{j'=1}^{n_0}2\|\Lambda^1\|((\sum_{p=1}^{n_1}\|\Lambda^1\| M)+\Omega_\rho) \leq 2n_1n_0 (M\|\Lambda^1\|^2 +\|\Lambda^1\|\Omega_\rho) \\
& \leq & 2n_1n_0 (M\|\Lambda^1\|^2 +(\|\Lambda^1\|^2+1)\Omega_\rho) = 2n_1n_0 ((M+\Omega_\rho)\|\Lambda^1\|^2 +\Omega_\rho).
\end{eqnarray*}
Therefore, 
\[
\|\nabla_{\mathbf{z}^i} E(F(\mathbf{x},\Lambda),\mathbf{y})\| \leq \sum_{j=1}^{n_1} |\frac{\partial}{\partial z^1_{j}} E(F(\mathbf{x},\Lambda),\mathbf{y})|  \leq 2n_1^2n_0 ((M+\Omega_\rho)\|\Lambda^1\|^2 +\Omega_\rho).
\]
	\item \[
|\frac{\partial}{\partial \lambda^1_{j,j'}} E(F(\mathbf{x},\Lambda),\mathbf{y})| \leq 2 M (\sum_{p=1}^{n_1}|\lm^1_{p,j'}|M+\Omega_\rho) \leq 2n_1 M^2 \|\Lambda^1\|+2M\Omega_\rho.
\]
Therefore, 
\[
\|\nabla_{\Lambda^1} E(F(\mathbf{x},\Lambda),\mathbf{y}) \| \leq \sum_{j=1}^{n_1}\sum_{j'=1}^{n_{0}} |\frac{\partial}{\partial \lm^i_{j,j'}}E(F(\mathbf{x},\Lambda),\mathbf{y})| \leq n_1 n_0 (2n_1 M^2 \|\Lambda^1\|+2M\Omega_\rho).
\]
\end{enumerate}
Thus, Inequality \eqref{eq-error-bound-induct} is true for $i=1$ and $\theta_\rho^1=\max\{2n_1^2n_0 ((M+\Omega_\rho), 2n_1^2 n_0  M^2, 2 n_1 n_0  M\Omega_\rho\}$.

Now assume that Inequality \eqref{eq-error-bound-induct} is true for $i-1\geq 1$. For $\|\mathbf{x}\|\leq \rho$, by the backward propagation \eqref{eq-derivative-backward-propagation}, we have that: 
\begin{enumerate}[1.]
	\item \begin{eqnarray*}
& & |\frac{\partial}{\partial z^i_j} E(\mathbf{z}^0,\mathbf{y})| = |\sum_{j'=1}^{n_{i-1}} \frac{\partial}{\partial z^{i-1}_{j'}} E(\mathbf{z}^0,\mathbf{y}) \cdot (\sigma^{i-1}_{j'})'(\sum_{p=1}^{n_i} \lambda^i_{p,j'}z^i_{p}) \cdot \lambda^i_{j,j'}| \\
& \leq &  \sum_{j'=1}^{n_{i-1}} |\frac{\partial}{\partial z^{i-1}_{j'}} E(\mathbf{z}^0,\mathbf{y})| \cdot |(\sigma^{i-1}_{j'})'(\sum_{p=1}^{n_i} \lambda^i_{p,j'}z^i_{p})| \cdot |\lambda^i_{j,j'}| \\
& \leq & \sum_{j'=1}^{n_{i-1}} \theta^{i-1}_\rho (\|\Lambda\|^i+1) \cdot M \cdot \|\Lambda\| = n_{i-1}  M  \theta^{i-1}_\rho (\|\Lambda\|^{i+1}+\|\Lambda\|) \leq n_{i-1}  M  \theta^{i-1}_\rho (2\|\Lambda\|^{i+1}+1).
\end{eqnarray*}
Therefore, 
\[
\|\nabla_{\mathbf{z}^i} E(F(\mathbf{x},\Lambda),\mathbf{y})\| \leq \sum_{j=1}^{n_i} |\frac{\partial}{\partial z^i_j} E(\mathbf{z}^0,\mathbf{y})| \leq n_in_{i-1}  M  \theta^{i-1}_\rho (2\|\Lambda\|^{i+1}+1).
\]
  \item \begin{eqnarray*}
	&& |\frac{\partial}{\partial \lm^i_{j,j'}} E(\mathbf{z}^0,\mathbf{y})| = |\frac{\partial}{\partial z^{i-1}_{j'}} E(\mathbf{z}^0,\mathbf{y}) \cdot (\sigma^{i-1}_{j'})'(\sum_{p=1}^{n_i} \lambda^i_{p,j'}z^i_{p}) \cdot z^i_{j}| \\
	& \leq & \|\nabla_{\mathbf{z}^{i-1}} E(F(\mathbf{x},\Lambda),\mathbf{y})\| \cdot M \cdot M \leq M^2 \theta^{i-1}_\rho (\|\Lambda\|^i +1).
\end{eqnarray*}
Consequently,
\[
\|\nabla_{\Lambda^i} E(F(\mathbf{x},\Lambda), \mathbf{y})\| \leq \sum_{j=1}^{n_i}\sum_{j'=1}^{n_{i-1}}|\frac{\partial}{\partial \lm^i_{j,j'}} E(\mathbf{z}^0,\mathbf{y})| \leq n_i n_{i-1}M^2 \theta^{i-1}_\rho (\|\Lambda\|^i +1).
\]
\end{enumerate}
Thus, Inequality \eqref{eq-error-bound-induct} is true for $i$ and $\theta_\rho^i=\max\{2n_in_{i-1}  M  \theta^{i-1}_\rho, n_i n_{i-1}M^2 \theta^{i-1}_\rho\}$. This proves Inequality \eqref{eq-error-bound-induct} for $i=1,\dots, l+1$.

Finally, we have that $\|g(\mathbf{x})\|\leq \Omega_\rho$ for any $\|\mathbf{x}\|\leq \rho$ and, consequently, 
\[
\|\nabla_\Lambda E(F(\mathbf{x},\Lambda),g(\mathbf{x}))\| \leq \sum_{i=1}^{l+1} \|\nabla_{\Lambda^i} E(F(\mathbf{x},\Lambda),g(\mathbf{x}))\| \leq \sum_{i=1}^{l+1} \theta_\rho^i (\|\Lambda\|^{i}+1) \leq \sum_{i=1}^{l+1} \theta_\rho^i (\|\Lambda\|^{l+1}+2).
\]
So the lemma is true for $\Theta_\rho = 2\sum_{i=1}^{l+1} \theta_\rho^i$.
\end{proof}

Based on Lemma \ref{lemma-error-bound}, we have the following examples of adequate augmentations for the error function $E(\mathbf{z},\mathbf{y})=\|\mathbf{z}-\mathbf{y}\|^2$.

\begin{corollary}\label{cor-sample-aa}
Define 
\begin{enumerate}
	\item $\alpha_1(\Lambda) = \delta \|\Lambda\|^t$, where $\delta, t \in \R$ satisfy $\delta>0$ and $t>l+2$,
	\item $\alpha_2(\Lambda) = \begin{cases} 0 &\text{if } \|\Lambda\|< r, \\ \delta (\|\Lambda\|-r)^t & \text{if }\|\Lambda\|\geq r,\end{cases}$ where $\delta, r,t \in \R$ satisfy $\delta, r>0$ and $t>l+2$,
	\item $\alpha_3 (\Lambda) = \begin{cases} 0 &\text{if } \|\Lambda\|< r, \\ e^{\|\Lambda\|-r}-\sum_{p=0}^q \frac{(\|\Lambda\|-r)^p}{p!} & \text{if }\|\Lambda\|\geq r,\end{cases}$ where $r>0$ and $q\geq 1$.
\end{enumerate}
Then, $\alpha_1,\alpha_2,\alpha_3$ are all adequate augmentations for the error function $E(\mathbf{z},\mathbf{y})=\|\mathbf{z}-\mathbf{y}\|^2$ under the assumptions of Lemma \ref{lemma-error-bound}.
\end{corollary}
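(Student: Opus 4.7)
The plan is to verify the three defining properties of adequate augmentation (Definition \ref{def-adequate}) for each of the three candidates, using Lemma \ref{lemma-error-bound} to control the contribution from $E(F(\mathbf{x},\Lambda),g(\mathbf{x}))$. Differentiability and the local Lipschitz property of $\nabla\alpha_i$ are largely standard; the only subtle points arise at the matching boundary $\|\Lambda\|=r$ for $\alpha_2$ and $\alpha_3$. For $\alpha_1$, since $t>l+2\geq 3$ we may compute $\nabla\alpha_1(\Lambda)=\delta t\|\Lambda\|^{t-2}\Lambda$, which is $C^1$ on all of $\R^N$ (as $t-2>1$) and hence locally Lipschitz. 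For $\alpha_2$, the function $\delta(\|\Lambda\|-r)^t$ vanishes to order $t$ at $\|\Lambda\|=r$, so the piecewise definition glues to a $C^1$ function whose gradient vanishes on $\{\|\Lambda\|\leq r\}$ and equals $\delta t(\|\Lambda\|-r)^{t-1}\Lambda/\|\Lambda\|$ outside; this gradient is Lipschitz on every ball because $t-1>1$. For $\alpha_3$, the Taylor remainder $e^x-\sum_{p=0}^q x^p/p!=\sum_{p=q+1}^\infty x^p/p!$ vanishes to order $q+1\geq 2$ at $x=0$, so again the two pieces glue $C^1$-smoothly with a locally Lipschitz gradient.

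The main work is the coercivity condition. The plan is to reduce the condition to a one-variable comparison in the scalar $\|\Lambda\|$. Applying Cauchy--Schwarz and Lemma \ref{lemma-error-bound}, for $\|\mathbf{x}\|\leq\rho$ we have
\[
\Lambda^T\nabla_\Lambda E(F(\mathbf{x},\Lambda),g(\mathbf{x}))\geq -\|\Lambda\|\cdot\|\nabla_\Lambda E(F(\mathbf{x},\Lambda),g(\mathbf{x}))\|\geq -\Theta_\rho\|\Lambda\|(\|\Lambda\|^{l+1}+1).
\]
Thus it suffices to verify that $\Lambda^T\nabla_\Lambda\alpha_i(\Lambda)$ eventually dominates $\Theta_\rho\|\Lambda\|(\|\Lambda\|^{l+1}+1)=\Theta_\rho(\|\Lambda\|^{l+2}+\|\Lambda\|)$ as $\|\Lambda\|\to\infty$.

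For $\alpha_1$, a direct computation gives $\Lambda^T\nabla\alpha_1(\Lambda)=\delta t\|\Lambda\|^t$; since $t>l+2$ this grows strictly faster than $\|\Lambda\|^{l+2}$, so an $R_\rho$ with $\delta t R_\rho^{t-l-2}\geq 2\Theta_\rho$ (and large enough to absorb the lower-order term) works. For $\alpha_2$, on $\{\|\Lambda\|\geq r\}$ I compute $\Lambda^T\nabla\alpha_2(\Lambda)=\delta t(\|\Lambda\|-r)^{t-1}\|\Lambda\|$, which for $\|\Lambda\|\geq 2r$ is at least $\delta t (\|\Lambda\|/2)^{t-1}\|\Lambda\|$, again growing like $\|\Lambda\|^t$; taking $R_\rho\geq 2r$ large enough gives the bound. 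For $\alpha_3$, on $\{\|\Lambda\|\geq r\}$, differentiating the series termwise yields $\Lambda^T\nabla\alpha_3(\Lambda)=\|\Lambda\|\bigl(e^{\|\Lambda\|-r}-\sum_{p=0}^{q-1}(\|\Lambda\|-r)^p/p!\bigr)$, which grows faster than any polynomial in $\|\Lambda\|$, so the required $R_\rho$ exists.

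The hardest step, such as it is, is the bookkeeping around the boundary $\|\Lambda\|=r$ for $\alpha_2$ and $\alpha_3$: I must confirm that one-sided derivatives match to give genuine $C^1$ regularity with locally Lipschitz gradient. The key point is that in both constructions the outer piece vanishes to order strictly greater than $2$ at the boundary (order $t>3$ for $\alpha_2$ and order $q+1\geq 2$ for $\alpha_3$), so its gradient and the difference quotient of that gradient both extend continuously by zero across $\|\Lambda\|=r$. Once that is in hand, the coercivity estimates above can simply be stated only for $\|\Lambda\|\geq R_\rho$ with $R_\rho\geq r$, so no comparison on the interior region is needed.
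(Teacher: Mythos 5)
Your proposal is correct and follows essentially the same route as the paper: the paper likewise checks differentiability and local Lipschitzness of $\nabla\alpha_i$, computes that $\Lambda^T\nabla\alpha_i(\Lambda)$ exceeds $\Theta_\rho(\|\Lambda\|^{l+2}+1)$ for $\|\Lambda\|\gg 1$, and concludes by the Cauchy--Schwarz comparison with Lemma \ref{lemma-error-bound}; you merely supply the boundary-gluing and growth details that the paper leaves as ``a straightforward computation.'' (One small caveat: for $\alpha_3$ with $q=1$ the Hessian of the outer piece does not vanish at $\|\Lambda\|=r$, so the difference quotient of the gradient is merely bounded rather than ``extending continuously by zero''---but boundedness is all that local Lipschitzness requires, so the conclusion stands.)
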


\begin{proof}
One can see that $\alpha_i$ is differentiable and that $\nabla \alpha_i$ is locally Lipschitz for $i=1,2,3$. A straightforward computation shows that, for $i=1,2,3$, $\Lambda^T \nabla \alpha_i (\Lambda) > \Theta_\rho(\|\Lambda\|^{l+2}+1)$ if $\|\Lambda\|\gg 1$. So, by Lemma \ref{lemma-error-bound}, we have that, for $i=1,2,3$,
\[
\Lambda^T \nabla_\Lambda \left(E(F(\mathbf{x},\Lambda),g(\mathbf{x}))+\alpha_i(\Lambda)\right)\geq \Lambda^T \nabla \alpha_i (\Lambda) - \|\Lambda\|\cdot\|\nabla_\Lambda \left(E(F(\mathbf{x},\Lambda),g(\mathbf{x}))\right)\| >0 \text{ for } \|\Lambda\|\gg 1.
\]
Thus, $\alpha_1,\alpha_2,\alpha_3$ are all adequate augmentations for the error function $E(\mathbf{z},\mathbf{y})=\|\mathbf{z}-\mathbf{y}\|^2$.
\end{proof}

\subsection{Augmented back-propagation algorithm} Next, we use Theorem \ref{thm-sgd} to establish a convergent augmented back-propagation algorithm.

\begin{proposition}\label{prop-AA-BP}
Let $F:\R^{n_{l+1}}\times \R^N\rightarrow \R^{n_0}$ be the function represented by the feed-forward network in Figure \ref{fig-feed-forward}. Fix a $0 <\rho\leq \infty$. Assume that:
\begin{enumerate}
	\item The functions $\{\s^i_j\}$ are uniformly $C^2$-bounded, that is, there is a $M>0$ such that $\s^i_j(t)\leq M$, $(\s^i_j)'(t)\leq M$ and $(\s^i_j)''(t)\leq M$ for all $i=1,\dots, l$, $j=1,\dots, n_i$ and $t\in\R$.
	\item $\mu$ is a probability measure on $B^{n_{l+1}}_\rho(\mathbf{0})$. $\{\mathbf{x}(k)\}_{k=0}^\infty$ is a sequence of independent random variables with identical probability distribution taking values in $B^{n_{l+1}}_\rho(\mathbf{0})$, where the probability distribution of each $\mathbf{x}(k)$ is $\mu$.
	\item $g:B^{n_{l+1}}_\rho(\mathbf{0})\rightarrow \R^{n_0}$ is a bounded function, that is, there is a $G>0$ such that $\|g(\mathbf{x})\|\leq G$ for all $\mathbf{x} \in B^{n_{l+1}}_\rho(\mathbf{0})$. And $\mathbf{y}(k)=g(\mathbf{x}(k))$ for $k=0,1,2,\dots$
	\item $E:\R^{n_0}\times\R^{n_0}\rightarrow \R$ is an error function that is $C^2$-bounded with respect to the first $n_0$ variables in $B^{n_{0}}_r \times B^{n_{0}}_G$ for every $r>0$. That is, there is a $\Gamma_r>0$ satisfying 
	\[
	|E(\mathbf{z},\mathbf{y})| + \|\nabla_{\mathbf{z}}E(\mathbf{z},\mathbf{y})\| + \sqrt{\Tr ((H_{\mathbf{z}}E(\mathbf{z},\mathbf{y}))^T H_{\mathbf{z}}E(\mathbf{z},\mathbf{y}))} \leq \Gamma_r\]
	for all $\mathbf{z}\in B^{n_{0}}_r$ and $\mathbf{y}\in B^{n_{0}}_G$, where $H_{\mathbf{z}}E(\mathbf{z},\mathbf{y})$ is the Hessian matrix of $E(\mathbf{z},\mathbf{y})$ with respect to $\mathbf{z}$.
	\item $\alpha:\R^N\rightarrow \R$ is an adequate augmentation with respect to $E,F,g$. Fix an $R_0>0$ such that 
	\[
	\Lambda^T \nabla_\Lambda \left(E(F(\mathbf{x},\Lambda),g(\mathbf{x}))+\alpha(\Lambda)\right)\geq 0
	\]
	whenever $\|\mathbf{x}\|\leq \rho$ and $\|\Lambda\| \geq R_0$.
	\item $\{\eta_k\}_{k=0}^\infty$ is a sequence of positive numbers satisfying $\sum_{k=0}^\infty\eta_k =\infty$ and $\sum_{k=0}^\infty \eta_k^2 <\infty$. Set $H = \sup\{\eta_k~|~k\geq 0\}$.
	\item 
	\begin{itemize}
		\item $\Lambda(0)=[\lambda^{i}_{j,j'}(0)]$ is any fixed element of $\R^N$, 
		\item $R_1 = \max\{\sqrt{\|\Lambda(0)\|^2 + \sum_{k=0}^\infty \eta_k^2},\sqrt{R_0^2 +2HR_0 +\sum_{k=0}^\infty \eta_k^2}\}$,
		\item $\Phi =\sup \{\|\nabla_\Lambda \left(E(F(\mathbf{x},\Lambda),g(\mathbf{x}))+\alpha(\Lambda)\right)\|~|~ (\mathbf{x},\Lambda) \in  B^{n_{l+1}}_\rho(\mathbf{0}) \times B^N_{R_1}(\mathbf{0})\}<\infty$,
		\item Fix a positive number $\vf$ satisfying $\vf\geq \Phi$.
	\end{itemize}
\end{enumerate}
Define a sequence of random variables $\{\Lambda(k) =[\lambda^{i}_{j,j'}(k)]\}_{k=0}^\infty$ by the following:
\begin{itemize}
	\item Forward propagation given inductively by \eqref{eq-forward-propagation};
  \item Backward propagation given inductively by \eqref{eq-backward-propagation};
  \item \textbf{Augmented weight update} given by
	\begin{equation}\label{eq-weight-update-aug}
	\lm^i_{j,j'}(k+1) = \lm^i_{j,j'}(k) - \frac{\eta_k}{\vf} \left(\frac{\partial}{\partial \lm^i_{j,j'}} (E(\mathbf{z}^0,\mathbf{y}(k))+\alpha(\Lambda))\right)\big{|}_{Z=Z(k),\Lambda=\Lambda(k)}.
	\end{equation}
\end{itemize}
Define the mean error function $\mathcal{E}: \R^N\rightarrow \R$ by $\mathcal{E}(\Lambda) = \int_{B^{n_{l+1}}_\rho(\mathbf{0})}E(F(\ast,\Lambda),g(\ast)) d\mu$. Then 
\begin{itemize}
	\item $\{\mathcal{E}(\Lambda(k))+\alpha(\Lambda(k))\}$ converges almost surely to a finite number,
	\item $\{\nabla\mathcal{E}(\Lambda(k))+\nabla\alpha(\Lambda(k))\}$ converges almost surely to $\mathbf{0}$,
	\item any limit point of $\{\Lambda(k)\}$ is almost surely a stationary point of the function $\mathcal{E} + \alpha$.
\end{itemize}
\end{proposition}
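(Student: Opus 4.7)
The plan is to reduce Proposition~\ref{prop-AA-BP} to Theorem~\ref{thm-sgd} by a direct substitution. I would set
\[
f(\Lambda,\mathbf{x}) := E(F(\mathbf{x},\Lambda),g(\mathbf{x})) + \alpha(\Lambda),
\]
regarded as a function on $\R^N \times B^{n_{l+1}}_\rho(\mathbf{0})$. Under this identification, the integral $\int f(\ast,\mathbf{x})\,d\mu = \mathcal{E}(\Lambda) + \alpha(\Lambda)$ plays the role of the function called $F$ in Theorem~\ref{thm-sgd}; the i.i.d.\ inputs $\{\mathbf{x}(k)\}$ play the role of $\{\mathbf{y}_k\}$; the learning rates $\{\eta_k\}$ play the role of $\{a_k\}$; and the augmented weight update \eqref{eq-weight-update-aug} is exactly the recursion \eqref{eq-def-x-k} for this $f$, since the backward propagation \eqref{eq-backward-propagation} computes $\nabla_\Lambda E(F(\mathbf{x}(k),\Lambda(k)),g(\mathbf{x}(k)))$ by the chain rule and $\mathbf{y}(k)=g(\mathbf{x}(k))$. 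Once Theorem~\ref{thm-sgd} is applied, its three conclusions translate verbatim into the three bullets in the proposition.

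The remaining work is to verify that $f$ satisfies assumptions (1)--(6) of Theorem~\ref{thm-sgd}. Measurability in $\mathbf{x}$ and $C^1$-regularity in $\Lambda$ are immediate from continuity of the forward/backward propagation formulas. The supremum $\Phi$ is finite by assumption (7). The pointwise gradient condition \eqref{assumption-gradient} is exactly the third bullet in Definition~\ref{def-adequate} of adequate augmentation, with the given $R_0$. For boundedness of $f$ and of $\|\nabla_\Lambda f\|$ on every set $B^N_r(\mathbf{0}) \times B^{n_{l+1}}_\rho(\mathbf{0})$: the forward propagation \eqref{eq-function-forward-propagation} keeps every $z^i_j$ bounded by $M$ for $i\leq l$, so $\mathbf{z}^0$ is polynomially bounded in $\|\Lambda\|$; then $|E|$ and $\|\nabla_{\mathbf{z}}E\|$ are bounded via the $C^2$-boundedness hypothesis on $E$, and the backward propagation \eqref{eq-derivative-backward-propagation} combined with $(\sigma^{i-1}_{j'})'\leq M$ bounds $\|\nabla_\Lambda E(F,g)\|$; finally $\alpha$ and $\nabla \alpha$ are continuous on $\R^N$ hence bounded on $B^N_r(\mathbf{0})$.

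The main technical obstacle is the Lipschitz condition on $\|\nabla_\Lambda f\|$, which is precisely where the extra $C^2$-hypotheses in the proposition (namely $(\sigma^i_j)''\leq M$ and the Hessian bound on $E$) enter. I would prove this by showing that the Hessian $H_\Lambda f(\Lambda,\mathbf{x})$ is bounded in operator norm uniformly on $B^N_r(\mathbf{0})\times B^{n_{l+1}}_\rho(\mathbf{0})$, and then invoking the mean value inequality. The Hessian splits as $H_\Lambda \alpha(\Lambda) + H_\Lambda (E\circ F)(\Lambda,\mathbf{x})$; the first term is locally bounded because $\nabla\alpha$ is locally Lipschitz by adequacy; the second is bounded by differentiating \eqref{eq-derivative-backward-propagation} once more in $\Lambda$, which produces a finite sum of products of quantities each of which is controlled---the $\sigma^i_j$, $(\sigma^i_j)'$, $(\sigma^i_j)''$ are bounded by $M$; the $z^i_j$ are polynomially bounded in $\|\Lambda\|$, hence bounded on $B^N_r$; and $\nabla_{\mathbf{z}}E$, $H_{\mathbf{z}}E$ are bounded by $\Gamma_{r'}$ for a suitable $r'$ depending only on $r,\rho$.

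Having verified every hypothesis of Theorem~\ref{thm-sgd}, its conclusions yield almost sure convergence of $\{\mathcal{E}(\Lambda(k))+\alpha(\Lambda(k))\}$ to a finite value, of $\{\nabla\mathcal{E}(\Lambda(k))+\nabla\alpha(\Lambda(k))\}$ to $\mathbf{0}$, and the stationarity of every limit point of $\{\Lambda(k)\}$ for $\mathcal{E}+\alpha$, exactly as asserted. I expect the Hessian/Lipschitz bookkeeping above to be the only place where nontrivial calculation is required; everything else is a clean reinterpretation of Theorem~\ref{thm-sgd} in the feed-forward setting.
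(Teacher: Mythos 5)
Your reduction is exactly the paper's proof: the paper simply applies Theorem \ref{thm-sgd} to $f(\Lambda,\mathbf{x}) := E(F(\mathbf{x},\Lambda),g(\mathbf{x})) + \alpha(\Lambda)$ with inputs $\{\mathbf{x}(k)\}$ and rates $\{\eta_k\}$, declaring the verification of the hypotheses ``straightforward to check.'' Your additional sketch of that verification (in particular the Hessian argument for the Lipschitz condition, which is where the $C^2$-bounds on $\s^i_j$ and $E$ are used) is correct and fills in detail the paper omits.
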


\begin{proof}
It is straightforward to check that Proposition \ref{prop-AA-BP} follows from the application of Theorem \ref{thm-sgd} to 
\begin{itemize}
	\item the function $f:\R^N\times B^{n_{l+1}}_\rho\rightarrow \R$ given by $f(\Lambda,\mathbf{x}) := E(F(\mathbf{x},\Lambda),g(\mathbf{x})) + \alpha(\Lambda)$,
	\item the inputs $\{\mathbf{x}(k)\}_{k=0}^\infty$,
	\item the sequence $\{\eta_k\}_{k=0}^\infty$.
\end{itemize}
\end{proof}

\section{Augmented Back-Propagation Algorithm for Acyclic Neural Networks}

The augmented back-propagation algorithm is not limited to feed-forward networks. We give in this section the augmented back-propagation algorithm for neural networks whose underlying directed graphs are acyclic. Such acyclic neural networks are straightforward generalizations of feed-forward networks and include, for example, feed-forward networks with biases. 

\subsection{Acyclic Neural Networks}

Let $G$ be a finite directed graph without loops and parallel edges. That is, 
\begin{itemize}
	\item the set $\mathfrak{V}$ of vertices of $G$ is finite,
	\item the set $\mathfrak{E}$ of edges of $G$ is a subset of $(\mathfrak{V}\times \mathfrak{V}) \setminus \{(v,v)~|~v\in \mathfrak{V}\}$.
\end{itemize}
Every edge $e\in \mathfrak{E}$ has a starting vertex $s(e)$ and a terminal vertex $t(e)$. For any $v\in \mathfrak{V}$, the set of edges pointing out of (resp. into) $v$ is denoted by $Out(v)=\{e\in \mathfrak{E}~|~ s(e)=v\}$ (resp. $In(v)=\{e\in \mathfrak{E} ~|~ t(e)=v\}$.) We also set $\mathfrak{V}_{in}=\{v\in \mathfrak{V}~|~ In(v)=\emptyset\}$ and $\mathfrak{V}_{out}=\{v\in \mathfrak{V} ~|~Out(v)=\emptyset\}$. $\mathfrak{V}_{in}$ and $\mathfrak{V}_{out}$ are called the sets of input and output vertices of $G$, respectively. A directed path in $G$ is a sequence $\{e_i\}_{i=1}^l$ of distinct edges satisfying 
\begin{itemize}
	\item $t(e_i)=s(e_{i+1})$ for $i=1,\dots,l-1$,
	\item $t(e_1),\dots,t(e_l)$ are distinct.
\end{itemize}
$\{e_i\}_{i=1}^l$ is called a directed cycle if we further have that $t(e_l)=s(e_1)$.

In the rest of this section, we assume that $G$ is \textbf{acyclic}, that is, $G$ does not contain any directed cycles. For each vertex $v$, we define the \textbf{depth} of $v$ to be
\begin{equation}\label{eq-def-d}
d(v):=\begin{cases}
0 & \text{if }v\in \mathfrak{V}_{in}, \\
\max\{l~|~ \text{there is a directed path } \{e_i\}_{i=1}^l \text{ in } G \text{ satisfying } t(e_l)=v\} & \text{if } v\notin \mathfrak{V}_{in}.
\end{cases}
\end{equation}
We also define the \textbf{height} of $v$ to be 
\begin{equation}\label{eq-def-h}
h(v):=\begin{cases}
0 & \text{if }v\in \mathfrak{V}_{out}, \\
\max\{l~|~ \text{there is a directed path } \{e_i\}_{i=1}^l \text{ in } G \text{ satisfying } s(e_1)=v\} & \text{if } v\notin \mathfrak{V}_{out}.
\end{cases}
\end{equation}
$d(v)$ and $h(v)$ are well defined since $G$ is acyclic. We have the following simple observations.

\begin{lemma}\label{lemma-d-h-consecutive}
\begin{enumerate}
  \item $\max\{h(v)~|~ v\in \mathfrak{V}\}=\max\{d(v)~|~v\in \mathfrak{V}\}$. We call this common value the \textbf{height} $H(G)$ of $G$ from now on.
	\item $\{d(v)~|~ v\in \mathfrak{V}\}=\{0,1,\dots,H(G)\}$ and $\{h(v)~|~ v\in v\}=\{0,1,\dots,H(G)\}$.
	\item $d(s(e))\leq d(t(e))-1$ and $h(t(e))\leq h(s(e))-1$ for any edge $e \in \mathfrak{E}$. 
\end{enumerate}
\end{lemma}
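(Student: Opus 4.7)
The plan is to prove (3) first, then deduce (1) and (2) by analyzing a longest directed path in $G$.

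For part (3), I would take an edge $e$ and a longest directed path $\{e_i\}_{i=1}^{d(s(e))}$ ending at $s(e)$, then append $e$. The appended sequence has terminal-vertex list $t(e_1),\dots,t(e_{d(s(e))}),t(e)$; everything needed for it to be a directed path is immediate from the original path except that $t(e)$ must be distinct from the earlier $t(e_i)$'s. If instead $t(e)=t(e_i)$ for some $i$, then $\{e_{i+1},\dots,e_{d(s(e))},e\}$ would start at $s(e_{i+1})=t(e_i)=t(e)$ and end at $t(e)$, i.e.\ form a directed cycle, contradicting acyclicity. Hence the appended sequence is a directed path of length $d(s(e))+1$ terminating at $t(e)$, giving $d(s(e))\leq d(t(e))-1$. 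The inequality $h(t(e))\leq h(s(e))-1$ is proved by the mirror argument: prepend $e$ to a longest directed path starting at $t(e)$.

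For part (1), let $L$ denote the length of a longest directed path in $G$. By construction of $d$ and $h$, every value $d(v)$ and $h(v)$ is the length of some directed path in $G$, so both are bounded above by $L$. Conversely, the terminal vertex $v$ of a longest directed path satisfies $d(v)\geq L$, and its initial vertex $w$ satisfies $h(w)\geq L$. Thus $\max_{v}d(v)=\max_{v}h(v)=L$, and we may call this common value $H(G)$.

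For part (2), fix a longest directed path $e_1,\dots,e_H$ with $H=H(G)$. Part (3) applied along this path gives $d(t(e_i))\leq d(t(e_{i+1}))-1$ for each $i$, so $d(t(e_1))<\cdots<d(t(e_H))\leq H$, while $d(t(e_i))\geq i$ from the subpath $e_1,\dots,e_i$; together these force $d(t(e_i))=i$, so every value in $\{1,\dots,H\}$ is attained. To attain $0$, I would show $s(e_1)\in\mathfrak{V}_{in}$: otherwise some $e_0$ with $t(e_0)=s(e_1)$ could be prepended, and acyclicity (used exactly as in part (3) to rule out $s(e_1)=t(e_j)$) would yield a directed path of length $H+1$, contradicting maximality. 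The statement for $h$ follows symmetrically.

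The main obstacle is the acyclicity bookkeeping in part (3): one must verify carefully that attaching an edge to a longest directed path never produces a repeated terminal vertex, since the definition of a directed path in this paper demands distinctness. Everything else reduces to routine length-counting along a single longest path.
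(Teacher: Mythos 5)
The paper states this lemma without proof (it is introduced only as a list of ``simple observations''), so there is no argument of the author's to compare against. Your proof is correct and complete: the path-extension argument for part (3), with the acyclicity check that appending or prepending an edge cannot repeat a terminal vertex, is exactly the content that needs verifying under the paper's definition of a directed path, and parts (1) and (2) then follow by your length-counting along a longest path (including the observation that the initial vertex of a longest path must lie in $\mathfrak{V}_{in}$, which is what forces the value $0$ to be attained).
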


A neural network underlay by $G$ consists of 
\begin{itemize}
	\item a weight function $\lambda:\mathfrak{E}\rightarrow \R$,
	\item an assignment of a function $\sigma_v:\R\rightarrow \R$ to each vertex $v\in \mathfrak{V}\setminus (\mathfrak{V}_{in}\cup \mathfrak{V}_{out})$.
\end{itemize}
Order the input and output vertices as $\mathfrak{V}_{in}=\{v_1,\dots, v_n\}$ and $\mathfrak{V}_{out}=\{u_1,\dots,u_m\}$. And denote by $N$ the number of edges in $G$. Note that, with an ordering of the edges of $G$, $\lambda$ can be viewed as a vector in $\R^N$. Then the above neural network defines a function $F:\R^n \times \R^N \rightarrow \R^m$ by $F(\mathbf{x},\lambda)=\mathbf{z}$ where $\mathbf{x}=[x_1,\dots,x_n]^T\in\R^n$, $\lambda:\mathfrak{E}\rightarrow \R$ and $\mathbf{z}=[z_1,\dots,z_m]^T\in\R^m$ are related by the following forward propagation
\begin{equation}\label{eq-forward-G}
\begin{cases}
z(v_i)=x_i & \text{for } i=1,2,\dots,n \\
z(v)=\sigma_v(\sum_{t(e)=v} \lambda(e)z(s(e))) & \text{for } v\in \mathfrak{V}\setminus (\mathfrak{V}_{in}\cup \mathfrak{V}_{out}), \\
z_j=z(u_j)= \sum_{t(e)=u_j} \lambda(e)z(s(e)) & \text{for } j=1,2,\dots,m.
\end{cases}
\end{equation}
It follows from Lemma \ref{lemma-d-h-consecutive} that forward propagation \eqref{eq-forward-G} is a well defined inductive definition. From now on, we assume that $\sigma_v$ is differentiable for each $v\in \mathfrak{V}$. It is straightforward to check that $F$ is differentiable in both $\mathbf{x}$ and $\lambda$.

Fix a differentiable error function $E:\R^m\times\R^m\rightarrow \R$. The function $E(F(\mathbf{x},\lambda),\mathbf{y})=E(\mathbf{z},\mathbf{y})$ is differentiable. Its partial derivatives with respect to $z(v)$ and $\lambda(e)$ can be computed by the following backward propagation. 
\begin{equation}\label{eq-backward-G}
\begin{cases}
\frac{\partial}{\partial z(v)}E(F(\mathbf{x},\lambda),\mathbf{y})= \sum_{e\in Out(v)} \left(\frac{\partial}{\partial z(t(e))}E(F(\mathbf{x},\lambda),\mathbf{y})\right) \cdot \lambda(e)\cdot(\sigma_{t(e)})'(\sum_{\hat{e}\in In(t(e))} \lambda(\hat{e})z(s(\hat{e}))),\\
\frac{\partial}{\partial \lambda(e)}E(F(\mathbf{x},\lambda),\mathbf{y}) = \left(\frac{\partial}{\partial z(t(e))}E(F(\mathbf{x},\lambda),\mathbf{y})\right)\cdot z(s(e)) \cdot(\sigma_{t(e)})'(\sum_{\hat{e}\in In(t(e))} \lambda(\hat{e})z(s(\hat{e}))).
\end{cases}
\end{equation}
By Lemma \ref{lemma-d-h-consecutive} again, backward propagation \eqref{eq-backward-G} is a well defined inductive computation.

Let $g:\R^{n}\rightarrow \R^{n}$ be a function. Assume that:
\begin{itemize}
	\item The sequence $\{(\mathbf{x}(k),\mathbf{y}(k))\}_{k=0}^\infty \subset \R^{n}\times \R^{m}$ satisfies $g(\mathbf{x}(k)) = \mathbf{y}(k)$.
	\item Fix a $\lambda_0:\mathfrak{E}\rightarrow \R$.
	\item $\{\eta_k\}_{k=0}^\infty \subset \R_+$.
\end{itemize} 

For the above setup, the classical back-propagation algorithm is given inductively by the following steps:
\begin{itemize}
	\item Forward propagation:
\begin{equation}\label{eq-forward-bp-G}
\begin{cases}
z(v_i,k)=x_i(k) & \text{for } i=1,2,\dots,n \\
z(v,k)=\sigma_v(\sum_{t(e)=v} \lambda_k(e)z(s(e),k)) & \text{for } v\in \mathfrak{V}\setminus (\mathfrak{V}_{in}\cup \mathfrak{V}_{out}), \\
z_j(k)=z(u_j,k)= \sum_{t(e)=u_j} \lambda(e)z(s(e),k) & \text{for } j=1,2,\dots,m,
\end{cases}
\end{equation}
where $\mathbf{x}(k)=[x_1(k),\dots,x_n(k)]^T\in \R^n$.
  \item Backward propagation:
\begin{equation}\label{eq-backward-bp-G}
\begin{cases}
\left(\frac{\partial}{\partial z(v)}E(F(\mathbf{x},\lambda),\mathbf{y})\right)|_{z(v)=z(v,k)~\forall v\in \mathfrak{V}, ~\lambda(e)=\lambda_k(e)~\forall e\in \mathfrak{E}} \\
= \sum_{e\in Out(v)} \lambda_k(e)\cdot(\sigma_{t(e)})'(\sum_{\hat{e}\in In(t(e))} \lambda_k(\hat{e})z(s(\hat{e}),k)) \cdot \left(\frac{\partial}{\partial z(t(e))}E(F(\mathbf{x},\lambda),\mathbf{y})\right)|_{z(v)=z(v,k)~\forall v\in \mathfrak{V}, ~\lambda(e)=\lambda_k(e)~\forall e\in \mathfrak{E}} ,\\
\\
\left(\frac{\partial}{\partial \lambda(e)}E(F(\mathbf{x},\lambda),\mathbf{y})\right)|_{z(v)=z(v,k)~\forall v\in \mathfrak{V}, ~\lambda(e)=\lambda_k(e)~\forall e\in \mathfrak{E}} \\ 
= z(s(e),k) \cdot(\sigma_{t(e)})'(\sum_{\hat{e}\in In(t(e))} \lambda_k(\hat{e})z(s(\hat{e}),k)) \cdot \left(\frac{\partial}{\partial z(t(e))}E(F(\mathbf{x},\lambda),\mathbf{y})\right)|_{z(v)=z(v,k)~\forall v\in \mathfrak{V}, ~\lambda(e)=\lambda_k(e)~\forall e\in \mathfrak{E}}.
\end{cases}
\end{equation}
  \item Weight update:
	\begin{equation}\label{eq-weight-update-G-bp}
	\lm_{k+1}(e) = \lm_{k}(e) - \eta_k \left(\frac{\partial}{\partial \lambda(e)}E(F(\mathbf{x},\lambda),\mathbf{y})\right)|_{z(v)=z(v,k)~\forall v\in \mathfrak{V}, ~\lambda(e)=\lambda_k(e)~\forall e\in \mathfrak{E}}.
	\end{equation}
\end{itemize}

\subsection{Adequate augmentations} The definition and properties of adequate augmentations generalize easily to acyclic neural networks. Let us introduce the following notations first. We view a function $\lambda:\mathfrak{E}\rightarrow \R$ as a column vector in $\R^N$ and write
\begin{eqnarray}
\|\lambda\|& = & \sqrt{\sum_{e\in \mathfrak{E}} (\lm(e))^2}, \\
\|\nabla_\lambda P(\mathbf{x},\lambda) \| & = & \sqrt{\sum_{e\in \mathfrak{E}} \left(\frac{\partial}{\partial \lm(e)}P(\mathbf{x},\lambda)\right)^2}, \\
\lambda^T \nabla_\lambda P(\mathbf{x},\lambda) & = & \sum_{e\in \mathfrak{E}} \lm(e)\frac{\partial}{\partial \lm(e)}P(\mathbf{x},\lambda)
\end{eqnarray}
for any function $P:\R^{n}\times \R^N \rightarrow \R$.

\begin{definition}\label{def-adequate-G}
For a given choice of the error function $E(\mathbf{z},\mathbf{y})$, a function $\alpha:\R^N\rightarrow \R$ is called an \textbf{adequate augmentation} with respect to $E,F,g$ if
\begin{itemize}
	\item $\alpha$ is differentiable,
	\item $\nabla \alpha$ is locally Lipschitz, that is, for any $r>0$ there is a $c_r>0$ such that $\|\nabla \alpha (\lambda) -\nabla \alpha (\lambda')\| \leq c_r\|\lambda - \lambda'\|$ whenever $\|\lambda\|,\|\lambda'\|\leq r$,
	\item for every $\rho>0$, there is an $R_\rho>0$ such that $\lambda^T \nabla_\lambda \left(E(F(\mathbf{x},\lambda),g(\mathbf{x}))+\alpha(\lambda)\right)\geq 0$ whenever $\|\mathbf{x}\|\leq \rho$ and $\|\lambda\| \geq R_\rho$.
\end{itemize}
\end{definition}

As in the case of feed-forward networks, it is not hard to find adequate augmentations for a given error functions satisfying some non-restrictive properties. Again, we give some examples of adequate augmentations for the error function $E(\mathbf{z},\mathbf{y})=\|\mathbf{z}-\mathbf{y}\|^2$ below. The key to our constructions is Lemma \ref{lemma-error-bound-G}, which is a straightforward generalizations of Lemma \ref{lemma-error-bound}.

\begin{lemma}\label{lemma-error-bound-G}
Fix $E(\mathbf{z},\mathbf{y})=\|\mathbf{z}-\mathbf{y}\|^2$. Assume that 
\begin{itemize}
	\item the functions $\{\s_v~|~v\in \mathfrak{V}\}$ are uniformly $C^1$-bounded, that is, there is an $M>0$ such that $\s_v(t)\leq M$ and $(\s_v)'(t)\leq M$ for all $v\in \mathfrak{V}$,
	\item the function $g(\mathbf{x})$ is locally bounded, that is, for every $\rho>0$, there is an $\Omega_\rho>0$ such that $\|g(\mathbf{x})\|\leq \Omega_\rho$ whenever $\|\mathbf{x}\|\leq \rho$.
\end{itemize}
Then, for every $\rho>0$, there is a $\Theta_\rho>0$ such that $\|\nabla_\lambda E(F(\mathbf{x},\lambda),g(\mathbf{x}))\| \leq \Theta_\rho (\|\lambda\|^{H(G)}+1)$ if $\|\mathbf{x}\|\leq \rho$.
\end{lemma}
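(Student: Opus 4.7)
The plan is to mirror the proof of Lemma \ref{lemma-error-bound}, with the layer index $i$ there replaced by the vertex height $h(v)$ here and $l+1$ replaced by $H(G)$. The structural fact that makes the backward--propagation induction go through is Lemma \ref{lemma-d-h-consecutive}(3), which guarantees $h(t(e)) \leq h(s(e)) - 1$ for every edge, so one factor of $\|\lambda\|$ is picked up each time \eqref{eq-backward-G} crosses an edge.

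First I would record a uniform bound on the forward--propagated values: $|z(v_i)| \leq \rho$ for $v_i \in \mathfrak{V}_{in}$, $|z(v)| \leq M$ for every hidden vertex, and for an output vertex the identity $z(v) = \sum_{e \in In(v)} \lambda(e) z(s(e))$ together with the previous two bounds yields $|z(v)| \leq C(\|\lambda\| + 1)$. In particular, $|z(s(e))|$ is bounded by some $K_\rho$ for every edge $e$, uniformly in $\lambda$. I would also adopt the convention $\sigma_v = \mathrm{id}$ for $v \in \mathfrak{V}_{in} \cup \mathfrak{V}_{out}$, and assume $M \geq 1$, so that $|(\sigma_v)'| \leq M$ holds uniformly.

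Next, by induction on $h = 0, 1, \ldots, H(G)-1$, I would prove the existence of constants $\theta^h_\rho > 0$ such that $\left|\frac{\partial}{\partial z(v)} E(F(\mathbf{x},\lambda),\mathbf{y})\right| \leq \theta^h_\rho (\|\lambda\|^{h+1}+1)$ whenever $h(v) = h$, $\|\mathbf{x}\|\leq \rho$ and $\|\mathbf{y}\| \leq \Omega_\rho$. The base case $h=0$ (output vertices) is immediate from $\partial E/\partial z(v) = 2(z(v) - y_j)$ and the linear-in-$\|\lambda\|$ bound on $z(v)$. For the inductive step, the first line of \eqref{eq-backward-G} expresses $\partial E/\partial z(v)$ as a sum over $Out(v)$ of terms of the form $\lambda(e) \cdot (\sigma_{t(e)})'(\cdots) \cdot \partial E/\partial z(t(e))$; since $h(t(e)) \leq h - 1$, the induction hypothesis bounds the last factor by $\theta^{h-1}_\rho(\|\lambda\|^h + 1)$, and summing over the finite set $Out(v)$ with $|\lambda(e)|\leq\|\lambda\|$ and $|(\sigma_{t(e)})'| \leq M$ yields the claimed bound after absorbing the extra $\|\lambda\|$ into a factor of $2$.

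Finally, the second line of \eqref{eq-backward-G} combined with $|z(s(e))| \leq K_\rho$ gives $|\partial E/\partial \lambda(e)| \leq \theta^{h(t(e))}_\rho K_\rho M (\|\lambda\|^{h(t(e))+1} + 1)$, and since $h(t(e))+1 \leq H(G)$ the elementary estimate $\|\lambda\|^k + 1 \leq 2(\|\lambda\|^{H(G)} + 1)$ for $k \leq H(G)$ collapses every edge's bound to a common $C(\|\lambda\|^{H(G)}+1)$. Summing over the finite edge set $\mathfrak{E}$ and taking the square root delivers the desired $\Theta_\rho$. I expect the only real obstacle to be bookkeeping: without a strict layer structure one has to treat the input/hidden/output cases for $s(e)$ and $t(e)$ separately, but the extended convention $\sigma_v = \mathrm{id}$ on $\mathfrak{V}_{in} \cup \mathfrak{V}_{out}$ collapses the case analysis and the argument proceeds exactly as in Lemma \ref{lemma-error-bound}.
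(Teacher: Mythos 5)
Your proposal is correct and follows essentially the same route as the paper's own proof: an induction on the height $h(v)$, grounded in Lemma \ref{lemma-d-h-consecutive}, establishing $|\partial E/\partial z(v)| \lesssim \|\lambda\|^{h(v)+1}+1$ at output vertices first and propagating upward via \eqref{eq-backward-G}, then converting to the $\lambda$-derivatives using the uniform bound $|z(s(e))|\leq\max\{M,\rho\}$ (valid since $s(e)$ is never an output vertex) and summing over the finite edge set. The only cosmetic differences are that you index the constants by height rather than by vertex and stop the induction at $h=H(G)-1$, which suffices since only $t(e)$ heights enter the final bound.
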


\begin{proof}
The proof of Lemma \ref{lemma-error-bound-G} follows more or less that of Lemma \ref{lemma-error-bound}. Without loss of generality, we assume that $M\geq \max\{1,\rho\}$. Fix a $\mathbf{y}=[y_1,\dots,y_m]^T\in\R^m$ satisfying $\|\mathbf{y}\|\leq \Omega_\rho$. We first prove that there is a function $\theta:\mathfrak{V} \rightarrow \R_{>0}$ satisfying
\begin{equation}\label{eq-lemma-error-bound-G-ind}
\begin{cases}
\left| \frac{\partial}{\partial z(v)} E(F(\mathbf{x},\lambda),\mathbf{y})\right| \leq \theta(v)(\|\lambda\|^{h(v)+1}+1) & \text{for every } v\in \mathfrak{V}, \\
\left| \frac{\partial}{\partial \lambda(e)} E(F(\mathbf{x},\lambda),\mathbf{y})\right| \leq M^2\theta(t(e))(\|\lambda\|^{h(t(e))+1}+1) & \text{for every } e\in \mathfrak{E}.
\end{cases}
\end{equation} 
By Lemma \ref{lemma-d-h-consecutive}, the function $\theta$ can be constructed by an induction on $h(v)$.

Assume that $h(v)=0$, then $v=u_j\in \mathfrak{V}_{out}$ for some $j$ and 
\[
\left|\frac{\partial}{\partial z(v)} E(F(\mathbf{x},\lambda),\mathbf{y})\right| = 2|z(u_j)-y_j| \leq 2|z(u_j)| + 2|y_j| = 2\left|\sum_{e\in In(u_j)}\lambda(e)z(s(e))\right|+ 2|y_j| \leq 2\|\lambda\|M\sqrt{\# In(u_j)} + 2\Omega_\rho,
\]
where $\# In(u_j)$ is the cardinality of $In(u_j)$. So we can define that $\theta(v)=\theta(u_j)=\max\{2M\sqrt{\# In(u_j)}, 2\Omega_\rho\}$. 

Now assume that $\theta$ is defined for all $u\in \mathfrak{V}$ satisfying $0\leq h(u)\leq h-1$ and that $v \in \mathfrak{V}$ satisfies $h(v)=h$. By Lemma \ref{lemma-d-h-consecutive}, we have that $h(t(e))\leq h-1$ for all $e \in Out(v)$. So, by Equation \eqref{eq-backward-G}, we have
\begin{eqnarray*}
\left|\frac{\partial}{\partial z(v)}E(F(\mathbf{x},\lambda),\mathbf{y})\right| & \leq & \sum_{e\in Out(v)} \left|\frac{\partial}{\partial z(t(e))}E(F(\mathbf{x},\lambda),\mathbf{y})\right| \cdot |\lambda(e)| \cdot \left|(\sigma_{t(e)})'(\sum_{\hat{e}\in In(t(e))} \lambda(\hat{e})z(s(\hat{e})))\right| \\
& \leq & \sum_{e\in Out(v)} \theta(t(e))(\|\lambda\|^{h(t(e))+1}+1) \cdot \|\lambda\| \cdot M \\
& = & M \sum_{e\in Out(v)} \theta(t(e))(\|\lambda\|^{h(t(e))+2}+\|\lambda\|) \leq M \sum_{e\in Out(v)} \theta(t(e))(2\|\lambda\|^{h+1}+2).
\end{eqnarray*}
Here, we use the convention that $\sigma_u(z)=z$ for all $u\in \R$ if $u\in \mathfrak{V}_{out}$. So we can define that $\theta(v)=2M \sum_{e\in Out(v)} \theta(t(e))$. This defines $\theta (v)$ for any $v$ with $h(v)=h$, which completes the inductive construction of $\theta$ and proves that the first inequality in \eqref{eq-lemma-error-bound-G-ind} is satisfied.

Using Equation \eqref{eq-backward-G} again, we get that, for any $e\in \mathfrak{E}$,
\[
\left|\frac{\partial}{\partial \lambda(e)}E(F(\mathbf{x},\lambda),\mathbf{y})\right| = \left|\frac{\partial}{\partial z(t(e))}E(F(\mathbf{x},\lambda),\mathbf{y})\right|\cdot |z(s(e))| \cdot \left|(\sigma_{t(e)})'(\sum_{\hat{e}\in In(t(e))} \lambda(\hat{e})z(s(\hat{e})))\right| 
\]
Note that $|z(s(e))| \leq  \max\{M,\rho\}=M$ and $\left|(\sigma_{t(e)})'(\sum_{\hat{e}\in In(t(e))} \lambda(\hat{e})z(s(\hat{e})))\right| \leq \max\{M,1\}=M$. So, the second inequality in \eqref{eq-lemma-error-bound-G-ind} follows from the first one.

Finally, for any $\mathbf{x} \in \R^n$ satisfying $\|\mathbf{x}\|\leq \rho$ and $\lambda \in \R^N$, by the second inequality in \eqref{eq-lemma-error-bound-G-ind}, we have
\begin{eqnarray*}
&& \|\nabla_\lambda E(F(\mathbf{x},\lambda),g(\mathbf{x}))\|  \leq  \sum_{e\in \mathfrak{E}}\left| \frac{\partial}{\partial \lambda(e)} E(F(\mathbf{x},\lambda),g(\mathbf{x}))\right| \\
& \leq & M^2 \sum_{e\in \mathfrak{E}}\theta(t(e))(\|\lambda\|^{h(t(e))+1}+1) \leq  M^2 \sum_{e\in \mathfrak{E}}\theta(t(e))(\|\lambda\|^{h(G)}+2).
\end{eqnarray*}
So $\Theta_\rho = 2M^2 \sum_{e\in \mathfrak{E}}\theta(t(e))$ satisfies the requirements in the lemma.
\end{proof}

\begin{corollary}\label{cor-sample-aa-G}
Define 
\begin{enumerate}
	\item $\alpha_1(\lambda) = \delta \|\lambda\|^t$, where $\delta, t \in \R$ satisfy $\delta>0$ and $t>h(G)+1$,
	\item $\alpha_2(\lambda) = \begin{cases} 0 &\text{if } \|\lambda\|< r, \\ \delta (\|\lambda\|-r)^t & \text{if }\|\lambda\|\geq r,\end{cases}$ where $\delta, r,t \in \R$ satisfy $\delta, r>0$ and $t>h(G)+1$,
	\item $\alpha_3 (\lambda) = \begin{cases} 0 &\text{if } \|\lambda\|< r, \\ e^{\|\lambda\|-r}-\sum_{p=0}^q \frac{(\|\lambda\|-r)^p}{p!} & \text{if }\|\lambda\|\geq r,\end{cases}$ where $r>0$ and $q\geq 1$.
\end{enumerate}
Then, $\alpha_1,\alpha_2,\alpha_3$ are all adequate augmentations for the error function $E(\mathbf{z},\mathbf{y})=\|\mathbf{z}-\mathbf{y}\|^2$ under the assumptions of Lemma \ref{lemma-error-bound-G}.
\end{corollary}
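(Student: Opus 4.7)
The plan is to follow the same three-step template used in the proof of Corollary \ref{cor-sample-aa}, transposed from the feed-forward setting ($l+2$) to the acyclic graph setting ($H(G)+1$) via Lemma \ref{lemma-error-bound-G}. For each $\alpha_i$ I need to verify the three bullets in Definition \ref{def-adequate-G}: differentiability, local Lipschitz continuity of $\nabla \alpha_i$, and the radial nonnegativity of $\lambda^T \nabla_\lambda (E(F(\mathbf{x},\lambda),g(\mathbf{x})) + \alpha_i(\lambda))$ for $\|\lambda\|$ sufficiently large, uniformly in $\|\mathbf{x}\| \leq \rho$.

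The differentiability and locally-Lipschitz-gradient claims are routine. Away from $\|\lambda\| = 0$ (for $\alpha_1$) and away from $\|\lambda\| = r$ (for $\alpha_2, \alpha_3$), the functions are smooth composites of smooth radial functions. For $\alpha_1(\lambda) = \delta\|\lambda\|^t$ with $t > H(G)+1 \geq 2$, the radial exponent is large enough that $\alpha_1$ is $C^2$ globally. For $\alpha_2$, smoothness at $\|\lambda\| = r$ follows because $(\|\lambda\|-r)^t$ and its first derivative vanish there (since $t-1>0$). For $\alpha_3$, the Taylor-remainder form is designed exactly so that $e^{\|\lambda\|-r} - \sum_{p=0}^{q}(\|\lambda\|-r)^p/p!$ has a zero of order $q+1 \geq 2$ at $\|\lambda\| = r$, giving a $C^2$ join. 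Local Lipschitz continuity of $\nabla \alpha_i$ then follows from continuous second derivatives on each bounded ball.

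The central step is the growth estimate for $\lambda^T \nabla \alpha_i(\lambda)$. Direct differentiation of the radial functions gives, for $\|\lambda\| > r$ (or $\|\lambda\|>0$ in the first case),
\[
\lambda^T \nabla \alpha_1(\lambda) = \delta t \|\lambda\|^t, \qquad \lambda^T \nabla \alpha_2(\lambda) = \delta t \|\lambda\|(\|\lambda\|-r)^{t-1},
\]
\[
\lambda^T \nabla \alpha_3(\lambda) = \|\lambda\|\left(e^{\|\lambda\|-r} - \sum_{p=1}^{q} \frac{(\|\lambda\|-r)^{p-1}}{(p-1)!}\right).
\]
In each case, this grows strictly faster than $\|\lambda\|^{H(G)+1}$ as $\|\lambda\| \to \infty$: polynomially of degree $t > H(G)+1$ for $i=1,2$, and exponentially for $i=3$. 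Consequently, for any $\rho > 0$, we have $\lambda^T \nabla \alpha_i(\lambda) > \Theta_\rho \|\lambda\|(\|\lambda\|^{H(G)} + 1)$ whenever $\|\lambda\|$ is sufficiently large, where $\Theta_\rho$ is the constant from Lemma \ref{lemma-error-bound-G}.

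Combining this with the Cauchy--Schwarz-based bound from Lemma \ref{lemma-error-bound-G},
\[
\lambda^T \nabla_\lambda \!\left( E(F(\mathbf{x},\lambda), g(\mathbf{x})) + \alpha_i(\lambda) \right) \geq \lambda^T \nabla \alpha_i(\lambda) - \|\lambda\| \cdot \|\nabla_\lambda E(F(\mathbf{x},\lambda), g(\mathbf{x}))\| > 0
\]
uniformly for $\|\mathbf{x}\| \leq \rho$ once $\|\lambda\|$ exceeds some $R_\rho$. This yields the third bullet in Definition \ref{def-adequate-G} and finishes the verification. There is no real obstacle here; the only point requiring mild care is the boundary smoothness of $\alpha_2$ and $\alpha_3$ at $\|\lambda\| = r$, which is handled by the vanishing-order conditions $t > H(G)+1 \geq 2$ and $q \geq 1$ respectively.
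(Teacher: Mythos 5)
Your proposal is correct and follows essentially the same route as the paper's (much terser) proof: verify differentiability and local Lipschitz continuity of $\nabla\alpha_i$, show $\lambda^T\nabla\alpha_i(\lambda)$ outgrows $\Theta_\rho(\|\lambda\|^{H(G)+1}+1)$, and conclude via the triangle/Cauchy--Schwarz inequality together with Lemma \ref{lemma-error-bound-G}. One pedantic quibble: for $\alpha_3$ with $q=1$ the join at $\|\lambda\|=r$ is only $C^{1,1}$, not $C^2$ (the second radial derivative jumps from $0$ to $1$), but since the definition only requires a locally Lipschitz gradient and the one-sided second derivatives are bounded on bounded sets, your conclusion stands.
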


\begin{proof}
One can see that $\alpha_i$ is differentiable and that $\nabla \alpha_i$ is locally Lipschitz for $i=1,2,3$. A straightforward computation shows that, for $i=1,2,3$, $\lambda^T \nabla \alpha_i (\lambda) > \Theta_\rho(\|\lambda\|^{h(G)+1}+1)$ if $\|\lambda\|\gg 1$. So, by Lemma \ref{lemma-error-bound-G}, we have that, for $i=1,2,3$,
\[
\lambda^T \nabla_\lambda \left(E(F(\mathbf{x},\lambda),g(\mathbf{x}))+\alpha_i(\lambda)\right)\geq \lambda^T \nabla \alpha_i (\lambda) - \|\lambda\|\cdot\|\nabla_\lambda \left(E(F(\mathbf{x},\lambda),g(\mathbf{x}))\right)\| >0 \text{ for } \|\lambda\|\gg 1.
\]
Thus, $\alpha_1,\alpha_2,\alpha_3$ are all adequate augmentations for the error function $E(\mathbf{z},\mathbf{y})=\|\mathbf{z}-\mathbf{y}\|^2$.
\end{proof}

\subsection{Augmented back-propagation algorithm} Next, we use Theorem \ref{thm-sgd} to establish a convergent augmented back-propagation algorithm for acyclic neural networks.

\begin{proposition}\label{prop-AA-BP-G}
Let $F:\R^{n}\times \R^N\rightarrow \R^{m}$ be the function represented by the acyclic neural network $G$. Fix a $0 <\rho\leq \infty$. Assume that:
\begin{enumerate}
	\item The functions $\{\s^i_j\}$ are uniformly $C^2$-bounded, that is, there is a $M>0$ such that $\s^i_j(t)\leq M$, $(\s^i_j)'(t)\leq M$ and $(\s^i_j)''(t)\leq M$ for all $i=1,\dots, l$, $j=1,\dots, n_i$ and $t\in\R$.
	\item $\mu$ is a probability measure on $B^{n}_\rho(\mathbf{0})$. $\{\mathbf{x}(k)\}_{k=0}^\infty$ is a sequence of independent random variables with identical probability distribution taking values in $B^{n}_\rho(\mathbf{0})$, where the probability distribution of each $\mathbf{x}(k)$ is $\mu$.
	\item $g:B^{n}_\rho(\mathbf{0})\rightarrow \R^{m}$ is a bounded function, that is, there is a $G>0$ such that $\|g(\mathbf{x})\|\leq G$ for all $\mathbf{x} \in B^{n}_\rho(\mathbf{0})$. And $\mathbf{y}(k)=g(\mathbf{x}(k))$ for $k=0,1,2,\dots$
	\item $E:\R^{m}\times\R^{m}\rightarrow \R$ is an error function that is $C^2$-bounded with respect to the first $m$ variables in $B^{m}_r \times B^{m}_G$ for every $r>0$. That is, there is a $\Gamma_r>0$ satisfying 
	\[
	|E(\mathbf{z},\mathbf{y})| + \|\nabla_{\mathbf{z}}E(\mathbf{z},\mathbf{y})\| + \sqrt{\Tr ((H_{\mathbf{z}}E(\mathbf{z},\mathbf{y}))^T H_{\mathbf{z}}E(\mathbf{z},\mathbf{y}))} \leq \Gamma_r\]
	for all $\mathbf{z}\in B^{m}_r$ and $\mathbf{y}\in B^{m}_G$, where $H_{\mathbf{z}}E(\mathbf{z},\mathbf{y})$ is the Hessian matrix of $E(\mathbf{z},\mathbf{y})$ with respect to $\mathbf{z}$.
	\item $\alpha:\R^N\rightarrow \R$ is an adequate augmentation with respect to $E,F,g$. Fix an $R_0>0$ such that 
	\[
	\lambda^T \nabla_\lambda \left(E(F(\mathbf{x},\lambda),g(\mathbf{x}))+\alpha(\lambda)\right)\geq 0
	\]
	whenever $\|\mathbf{x}\|\leq \rho$ and $\|\lambda\| \geq R_0$.
	\item $\{\eta_k\}_{k=0}^\infty$ is a sequence of positive numbers satisfying $\sum_{k=0}^\infty\eta_k =\infty$ and $\sum_{k=0}^\infty \eta_k^2 <\infty$. Set $H = \sup\{\eta_k~|~k\geq 0\}$.
	\item 
	\begin{itemize}
		\item $\lambda_0: \mathfrak{E} \rightarrow \R$ is any fixed function viewed as an element of $\R^N$, 
		\item $R_1 = \max\{\sqrt{\|\lambda_0\|^2 + \sum_{k=0}^\infty \eta_k^2},\sqrt{R_0^2 +2HR_0 +\sum_{k=0}^\infty \eta_k^2}\}$,
		\item $\Phi =\sup \{\|\nabla_\lambda \left(E(F(\mathbf{x},\lambda),g(\mathbf{x}))+\alpha(\lambda)\right)\|~|~ (\mathbf{x},\lambda) \in  B^{n}_\rho(\mathbf{0}) \times B^N_{R_1}(\mathbf{0})\}<\infty$,
		\item Fix a positive number $\vf$ satisfying $\vf\geq \Phi$.
	\end{itemize}
\end{enumerate}
Define a sequence of random functions $\{\lambda_k:\mathfrak{E} \rightarrow \R\}_{k=0}^\infty$ by the following:
\begin{itemize}
	\item Forward propagation given inductively by \eqref{eq-forward-bp-G};
  \item Backward propagation given inductively by \eqref{eq-backward-bp-G};
  \item \textbf{Augmented weight update} given by
	\begin{equation}\label{eq-weight-update-aug-G}
	\lm_{k+1}(e) = \lm_{k}(e) - \frac{\eta_k}{\vf} \left(\frac{\partial}{\partial \lm(e)} (E(F(\mathbf{x}(k),\lambda),\mathbf{y}(k))+\alpha(\lambda))\right)\big{|}_{\lambda=\lambda_k}.
	\end{equation}
\end{itemize}
Define the mean error function $\mathcal{E}: \R^N\rightarrow \R$ by $\mathcal{E}(\lambda) = \int_{B^{n}_\rho(\mathbf{0})}E(F(\ast,\lambda),g(\ast)) d\mu$. Then 
\begin{itemize}
	\item $\{\mathcal{E}(\lambda_k)+\alpha(\lambda_k)\}$ converges almost surely to a finite number,
	\item $\{\nabla\mathcal{E}(\lambda_k)+\nabla\alpha(\lambda_k)\}$ converges almost surely to $\mathbf{0}$,
	\item any limit point of $\{\lambda_k\}$ is almost surely a stationary point of the function $\mathcal{E} + \alpha$.
\end{itemize}
\end{proposition}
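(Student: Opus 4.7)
The plan is to derive Proposition \ref{prop-AA-BP-G} as a direct application of Theorem \ref{thm-sgd}, paralleling the reduction used in the feed-forward case (Proposition \ref{prop-AA-BP}). I would apply Theorem \ref{thm-sgd} with the role of $f$ played by
\[
f(\lambda,\mathbf{x}) := E(F(\mathbf{x},\lambda),g(\mathbf{x})) + \alpha(\lambda),
\]
the role of the random variables $\{\mathbf{y}_k\}$ of the theorem played by the inputs $\{\mathbf{x}(k)\}$, and the learning rates $\{a_k\}$ replaced by $\{\eta_k\}$. Since $\alpha$ is independent of the integration variable and $\mu$ is a probability measure, $\int_{B^n_\rho(\mathbf{0})} f(\lambda,\ast)\,d\mu = \mathcal{E}(\lambda) + \alpha(\lambda)$, and the augmented weight update \eqref{eq-weight-update-aug-G} is exactly the recursion \eqref{eq-def-x-k} for this $f$. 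Under this identification, the three conclusions of Theorem \ref{thm-sgd} translate verbatim into the three conclusions claimed for $\mathcal{E}+\alpha$.

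The bulk of the work is checking that $f$ satisfies assumption (\ref{assumption-f}) of Theorem \ref{thm-sgd} on $\R^N\times B^n_\rho(\mathbf{0})$. The $\mu$-measurability of $f(\lambda,\ast)$ is immediate from continuity of $f$ in $\mathbf{x}$, since forward propagation is a composition of continuous functions. The existence and continuity of $\nabla_\lambda f$ follow by applying the chain rule to the forward/backward propagation formulas \eqref{eq-forward-G}--\eqref{eq-backward-G}, using differentiability of each $\sigma_v$, of $E$, and of $\alpha$. Boundedness of $f$ and of $\|\nabla_\lambda f\|$ on each slab $B^N_r(\mathbf{0})\times B^n_\rho(\mathbf{0})$ follows from the $C^1$-boundedness of the $\sigma_v$ (which by induction on $d(v)$ bounds every $z(v,\lambda,\mathbf{x})$ on the slab), the hypothesis $\|g(\mathbf{x})\|\le G$, the $C^2$-boundedness of $E$ on bounded sets given by assumption (4), and continuity of $\alpha$ and $\nabla\alpha$.

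The technically most delicate piece I expect is the Lipschitz bound on $\nabla_\lambda f$ in $\lambda$. The cleanest route is to bound the Hessian $H_\lambda f$ on the slab and then invoke the mean value inequality. Differentiating the backward propagation \eqref{eq-backward-G} once more in $\lambda$ produces a sum of products involving the values $z(v)$, the weights $\lambda(e)$, the first and second derivatives of the $\sigma_v$, and the gradient and Hessian of $E$ with respect to its first argument; all of these are bounded on the slab by assumptions (1) and (4), using an induction on the heights $h(v)$ of the sort carried out in Lemma \ref{lemma-error-bound-G}. The augmentation contribution is handled by the local Lipschitz property of $\nabla\alpha$ built into Definition \ref{def-adequate-G}. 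Combining these yields a finite Lipschitz constant $M$ on each slab, as required.

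Finally, the condition $\lambda^T\nabla_\lambda f(\lambda,\mathbf{x})\ge 0$ for $\|\lambda\|\ge R_0$ and $\mathbf{x}\in B^n_\rho(\mathbf{0})$ is precisely the defining inequality of an adequate augmentation, hence is given by hypothesis (5). The choice of $R_1$, $\Phi$ and $\vf$ in the proposition matches the choices in Theorem \ref{thm-sgd} exactly. The main obstacle is therefore the Hessian bookkeeping in the Lipschitz estimate — essentially an induction on $h(v)$ analogous to, but messier than, the one in Lemma \ref{lemma-error-bound-G} — but no genuinely new analytical ideas are required beyond those already developed for the feed-forward case.
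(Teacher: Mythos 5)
Your proposal is exactly the paper's proof: apply Theorem \ref{thm-sgd} to $f(\lambda,\mathbf{x}) = E(F(\mathbf{x},\lambda),g(\mathbf{x})) + \alpha(\lambda)$ with inputs $\{\mathbf{x}(k)\}$ and learning rates $\{\eta_k\}$; the paper simply declares the verification of the hypotheses "straightforward to check," whereas you sketch that verification (boundedness via induction on depths, the Lipschitz estimate via Hessian bounds and induction on heights, and the sign condition from the adequate-augmentation hypothesis). Same route, with more of the bookkeeping made explicit.
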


\begin{proof}
It is straightforward to check that Proposition \ref{prop-AA-BP-G} follows from the application of Theorem \ref{thm-sgd} to 
\begin{itemize}
	\item the function $f:\R^N\times B^{n}_\rho \rightarrow \R$ given by $f(\lambda,\mathbf{x}) := E(F(\mathbf{x},\lambda),g(\mathbf{x})) + \alpha(\lambda)$,
	\item the inputs $\{\mathbf{x}(k)\}_{k=0}^\infty$,
	\item the sequence $\{\eta_k\}_{k=0}^\infty$.
\end{itemize}
\end{proof}

\end{document}